\DeclareMathOperator{\m}{\mu}
\DeclareMathOperator{\lm}{\leadsto_{\!\!\m}}
\newcommand{\inv}{^{\raisebox{.2ex}{$\scriptscriptstyle-1$}}}
\newtheorem{theorem}{Theorem}[section]
\newtheorem{proposition}[theorem]{Proposition}
\newtheorem{corollary}[theorem]{Corollary}
\theoremstyle{definition}
\newtheorem{definition}{Definition}[section]
\newtheorem{remark}[theorem]{Remark}
\newtheorem{example}[theorem]{Example}
\numberwithin{equation}{section}
\def\leq{\leqslant}
\begin{document}
	
\title{$\m$-elements: An extension of essential elements}
	
\author{Elena Caviglia}

\address{[1] Department of Mathematical Sciences, Stellenbosch University, South Africa. [2]  National Institute for Theoretical and Computational Sciences (NITheCS), South Africa.}
\email{elena.caviglia@outlook.com}
	
\author{Amartya Goswami}
\address{[1]  Department of Mathematics and Applied Mathematics, University of Johannesburg, P.O. Box 524, Auckland Park 2006, South Africa. [2]  National Institute for Theoretical and Computational Sciences (NITheCS), South Africa.}
\email{agoswami@uj.ac.za}
	
\author{Luca Mesiti}
\address{Department of Mathematics, University of KwaZulu-Natal, South Africa.}
\email{luca.mesiti@outlook.com}
	
\subjclass{06F07; 13A15; 06B23.}
	
	
	
\keywords{Quantale, essential element, atom, irreducible, pseudo-complement.}
	
\begin{abstract}
We introduce and study $\m$-elements, that generalize a lattice-theoretic abstraction (namely, essential elements) of essential ideals of rings, essential submodules of modules, and dense subsets of topological spaces. Exploring several examples, we show that $\m$-elements are indeed a genuine extension of essential elements.  We study preservation of $\m$-elements under contractions and extensions of quantale homomorphisms. We introduce $\m$-complements and $\m$-closedness and study their properties. We  determine $\m$-elements for several distinguished quantales,  including ideals of $\mathds{Z}_n$ and open subsets of topological spaces. Finally, we provide a complete characterization of $\m$-elements in modular quantales.
\end{abstract}
	
\maketitle 
	
\section{Introduction}
This paper introduces and studies various aspects of $\m$-elements, in the setting of quantales. The concept of 
$\m$-elements provides a natural generalization of essential elements, shifting from considering meets with only one element to consider meets with any finite number of elements. Recall that an element $x$ in a lattice $L$ is called \emph{essential}\footnote{also known as  \emph{dense} element (see \cite{DB23}) or \emph{large} element (see \cite{Row88}).} if $x\wedge y\neq 0$ for all $y\neq 0$ in $L$. It is clear that essential elements can be viewed as a lattice-theoretic abstraction of both dense subsets of topological spaces and essential submodules (and in particular, essential ideals) of rings. Indeed, the set of open subsets of a topological space forms a frame, while the set of submodules of a module forms a modular lattice. It is well-known (see \cite{Row88}) that the socle of a module is nothing but the intersection of essential submodules of it, and a module $E$ is called the injective hull of a module $M$, if $E$ is an essential extension of $M$, and $E$ is injective. It is no surprise that a similar result for the socle of a modular lattice holds for essential elements, and that there is a notion of essential extension with respect to essential elements (see \cite{Cal00}).  Moreover, comprehensive studies of essential ideals of rings have also been done (see \cite{OJ81, Tah15, Aza97, Aza95, Mom10, Gre05}). On the other hand, dense elements of lattices shed light on the lattice-theoretic aspects of dense subsets of topological spaces, and these have been well studied (see \cite{DB23, BD22, HM07, KRB22, EN75}).

Our notion of $\m$-elements abstracts the concept of $\mathcal{M}$-ideals (more generally, $\mathcal{M}$-submodules) as defined in \cite{BG25}, which themselves form a ring-theoretic extension of $\mathcal{M}$-ideals of Banach spaces introduced in \cite{AE72}. Let us briefly see how the transition from Banach spaces to rings occurs. Suppose that $V$ is a real Banach space, and that $W$ is the dual Banach
space of $V$. A subspace $N$ of $W$ is said to be an $L$-summand of $W$ if there is
a subspace $M$ with $N\cap M=\{0\}$, $N+M=W$, and for $p\in N$, $q\in M$, $\| p+q\|=\|p\|+\|q\|$.  A closed subspace $J$ of $V$ is said to be an $\mathcal{M}$-ideal if its annihilator $J^\perp$
is an $L$-summand in $W$.  According to \cite[Theorem 5.8]{AE72}, an ideal $J$ is an $\mathcal{M}$-ideal if and only if the following condition holds: if $B_1$, $\ldots$, $B_n$ are open balls with $B_1\cap \cdots\cap B_n\neq \emptyset$ and $B_i\cap J\neq \emptyset$ for all $i$, then $B_1\cap \cdots\cap B_n \cap J\neq \emptyset$. Now, note that we can replace the Banach space by a ring, and $J$ and all open balls by ideals, so that we obtain the definition of a ``ring $\mathcal{M}$-ideal'' as follows. An ideal \(J \) of a ring $R$ is called an \emph{\( \mathcal{M} \)-ideal} if for a finite collection of nonzero ideals $I_1$, $I_2$, $\dots$, $I_n $ of $R$ with  $\bigcap_{k=1}^n I_k\neq 0$, the condition \(J \cap I_k \neq 0\) for each \(k\), where $k\in \{1, \ldots, n\}$, implies that
\[	J \cap \left( \bigcap_{k=1}^n I_k \right) \neq 0.
\]
And we can then abstract this concept to the context of lattices (see Definition \ref{mid}). Interestingly enough, $\mathcal{M}$-ideals of rings turn out to be a generalization of essential ideals, and hence, our study is well-motivated. One major ``structural'' difference between $\mathcal{M}$-ideals of rings and $\mathcal{M}$-ideals of Banach spaces is the following. For $\mathcal{M}$-ideals of Banach spaces, it is necessary and sufficient to consider only three open balls (see \cite[Theorem 5.9]{AE72}), whereas for $\mathcal{M}$-ideals of rings (and hence, $\m$-elements of quantales), in our definition above, two ideals are sufficient (see Proposition \ref{net} for $\m$-elements version).

The goal of this paper is twofold. Firstly, we concentrate on studying several interesting properties that $\m$-elements enjoy in a quantale setting. Secondly, we show in what respects they behave differently from essential elements.

After briefly recalling preliminary definitions in Section \ref{bck}, we introduce $\m$-elements in Section \ref{muel}. Through several examples, we establish that $\m$-elements are a natural and genuine generalization of essential elements.  We study preservation of $\m$-elements under contractions and extensions of quantale homomorphisms. Several properties of $\m$-elements are proved specifically for upsets. The purpose of Section \ref{mucc} is to introduce $\m$-complements and $\m$-closedness, that generalize essential complements and essential closedness, and study their properties. One of the main results in this section is to prove the existence of all $\m$-complements in every modular quantale. In Section \ref{chmu}, we give characterizations of $\m$-elements in several distinguished quantales, which include quantales of ideals of $\mathds{Z}_n$ and more in general of quotients of PIDs, and frames of open sets of topological spaces. Finally, we present a complete characterization of $\m$-elements in modular quantales: we prove that every $\m$-element is either essential or irreducible. 
	
\section{Background}
\label{bck}

In this preparatory section, we briefly recall some basic concepts, and we fix the notation. An  \emph{integral commutative quantale} is a complete lattice $(L, \leqslant, 0, 1)$ endowed with an associative, commutative multiplication (denoted by $\cdot$), which distributes over arbitrary joins and has $1$ as multiplicative identity. This definition was first introduced in \cite{WD39} (also see \cite{Dil62}), where the authors referred to it as a \emph{multiplicative lattice}. Throughout this paper, unless otherwise stated, by a ``quantale,'' we shall always refer to an integral commutative quantale.
For brevity, we shall write $xy$ for $x\cdot y$ and  $x^n$ for $x\cdot   \cdots \cdot   x$ (repeated $n$ times). By $x<y$, we shall mean $x\leqslant y$ and $x\neq y$. We say a quantale is \emph{distributive} (respectively, \emph{modular}) if the underlying lattice is distributive (respectively, modular). We say that a map $\varphi\colon L \to L'$ is a \emph{quantale homomorphism} if $\varphi$ preserves arbitrary joins, multiplication, and $1$.
	
	
			
			
			
			
			
	
	
Let us recall a few definitions that will be needed in this paper. A \emph{frame} is a quantale, where meet coincides with multiplication.  The \emph{upset} and  \emph{downset} of an element $a$ in $L$ are respectively defined as $a^{\uparrow}:=\{x\in L \mid a\leqslant x\leqslant 1\}$ and $a^\downarrow:=\{ x\in L\mid 0\leqslant x\leqslant a\}$. A \emph{proper} element $x$ in $L$ is such that $x\neq 1$. A \emph{complement} for an element $x$ in a quantale $L$ is an element $y\in L$ such that $x\wedge y=0$ and $x\vee y=1$. A \emph{pseudo-complement} for an element $x$ in a quantale $L$ is an element $y\in L$ that is the greatest element for which $x\wedge y=0$. We say that a quantale $L$ is \emph{complemented} (respectively, \emph{pseudo-complemented}) if every  element $x\in L$ has a complement (respectively, pseudo-complement) in $L$. A subset $\{a_i\}_{i\in I}$ of a quantale $L$ is called \emph{independent} if for every $i$, the following equality holds:\[a_i\wedge \left(  \bigvee_{i\neq j, j\in I} a_j\right)=0.\]  
Given an element $a$ in $L$, an element $b$ in $L$ is called \emph{minimal above} $a$ if $a<b$ and for any element $c\in L$ with the property that $a<c\leqslant b$ implies that $c=b.$ An \emph{atom} is a minimal element above $0$. An element $x$ (with $x\neq 1$) is said to be \emph{maximal} if $x<y\leqslant 1$, then $y=1$. We say a quantale is \emph{local} if it has exactly one maximal element. 
An element $c$ in $L$ is called \emph{compact} if whenever  $\{x_{\lambda}\}_{\lambda \in \Lambda}\subseteq L$ and $c\leqslant \bigvee_{\lambda \in \Lambda} x_{\lambda}$, we have $c\leqslant \bigvee_{i=1}^n x_{\lambda_i}$, for some $n\in \mathds{N}^+$(where $\mathds{N}^+$ is the set of positive natural numbers), and  $L$ is said to be \emph{compactly generated} if every element of
$L$ is the join of compact elements in $L$. 
The \emph{annihilator} of an element $a$ in $L$ is defined as \[a^{\perp}:=\bigvee\{ x\in L\mid xa=0\}.\]
To denote the annihilator of the annihilator $a^{\perp}$  of an element $a$, instead of $(a^{\perp})^{\perp}$, we shall use the notation $a^{\perp\perp}$. An element $a\in L$ is said to be \emph{regular} if  $a^{\perp\perp}=a$. Recall that the join of all atoms of a quantale $L$ is called \emph{socle}, and we denote it by $\mathrm{Soc}(L)$. 

A comprehensive discussion regarding lattices, frames, and quantales can be found in \cite{Cal00}, \cite{PP12}, and \cite{Ros90}, respectively.
	
\section{$\m$-elements}\label{muel}
	
In this section, we present and explore the notion of $\m$-elements, which naturally extends the notion of essential elements. Interestingly, both essential elements and atoms are always $\m$-elements. We generalize many properties and theorems of essential elements to $\m$-elements. Furthermore, we show that while some properties of essential elements extend to 
$\m$-elements, others do not.
		
\begin{definition}
\label{mid}
An element \(x \) in a quantale $L$ is called a \emph{$\m$-element} if for every natural number $n\geqslant 2$ and every collection of $n$ elements $y_1$, $ y_2$, $\ldots$, $y_n $ in $L$ such that $\bigwedge_{k=1}^n y_k\neq 0$, the condition \(x \wedge y_k \neq 0\) for each \(k\),  implies that
\[	x \wedge \left( \bigwedge_{k=1}^n y_k \right) \neq 0.
\]
We write $x\lm b^\downarrow$ (resp. $x\lm b^\uparrow$) to mean that $x\leqslant b$ (resp. $b\leqslant x$) and $x$ is a $\m$-element in $b^{\downarrow}$ (resp. in $b^{\uparrow}$). 
\end{definition}
	
The notion of a
$\m$-element generalizes that of an essential element by considering all finite collections of elements $y_1$, $y_2$, \ldots, $y_n$ (with $n\geqslant 2$) rather than just a single element $y$. We shall see that essential elements are always 
$\m$-elements and that there exist 
$\m$-elements that are not essential elements.
	
For the rest of the paper, we shall work with quantales unless explicitly stated otherwise. This will help, once and for all, in avoiding various assumptions such as bounded lattices, complete lattices, upper continuous lattices, \textit{etc.}, which would be required in specific contexts. However, when we deal with $a^\uparrow$, we need to restrict ourselves to frames. This is simply because  $a^\uparrow$ is not, in general, a sub-quantale of a quantale $L$. One  may ask, why not then work with frames only? The answer is the set of all ideals of a commutative ring  with identity  is a quantale, but not a frame in general; and ideals are going to be our vital source of examples (and counterexamples) of $\m$-elements.

Note that in the literature, whenever $a$ is an essential element in $b^\downarrow$, $b$ is called an \emph{essential extension of} $a$, and it is also denoted by $a\leqslant_e b$. However, there is no similar terminology (and hence notation) for $a$ being essential in $b^\uparrow$.  We have proposed the notation $a\lm\star$ to accommodate both $b^\downarrow$ and $b^\uparrow$; otherwise, a notation like $a\leqslant_{\mu} b^\uparrow$ would be strange, as in this case $a\nleqslant b$.

\begin{proposition}\label{eim}
Every essential element in a quantale is a \( \m \)-element.
\end{proposition}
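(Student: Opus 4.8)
The plan is to unwind the two definitions and observe that essentiality already delivers the conclusion directly from the hypothesis $\bigwedge_{k=1}^n y_k \neq 0$, without even using the extra assumption that $x \wedge y_k \neq 0$ for each $k$. So I would start by fixing an essential element $x$ in a quantale $L$, a natural number $n \geqslant 2$, and elements $y_1, \ldots, y_n \in L$ satisfying the hypotheses of Definition \ref{mid}, namely $\bigwedge_{k=1}^n y_k \neq 0$ and $x \wedge y_k \neq 0$ for all $k \in \{1, \ldots, n\}$.

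Then the key step is simply to apply the definition of essentiality to the single element $y := \bigwedge_{k=1}^n y_k$. Since $y \neq 0$ by assumption, essentiality of $x$ gives $x \wedge y \neq 0$, that is, $x \wedge \left( \bigwedge_{k=1}^n y_k \right) \neq 0$, which is exactly the conclusion required to show that $x$ is a $\m$-element. I do not anticipate any genuine obstacle here; the only thing worth remarking (and perhaps worth stating explicitly after the proof) is that the defining condition of $\m$-elements is used in a degenerate way, since the hypothesis $x \wedge y_k \neq 0$ plays no role — this observation foreshadows why $\m$-elements form a strictly larger class, as the $x \wedge y_k \neq 0$ conditions become a real restriction precisely when $x$ fails to be essential.
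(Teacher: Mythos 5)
Your proof is correct and follows exactly the same route as the paper's: apply essentiality of $x$ to the single element $\bigwedge_{k=1}^n y_k \neq 0$, which immediately yields the required conclusion. Your remark that the hypotheses $x \wedge y_k \neq 0$ are not needed here is accurate and consistent with the paper's subsequent discussion of why $\m$-elements strictly extend essential ones.
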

	
\begin{proof}
Let $x\in L$ be an essential element. Let then $n\geqslant 2$ and $y_1$, $ y_2$, $\ldots$, $y_n $ in $L$ such that $\bigwedge_{k=1}^n y_k\neq 0$ and \(x \wedge y_k \neq 0\) for each \(k\). Since $x$ is essential, the condition $\bigwedge_{k=1}^n y_k\neq 0$ implies that $x\wedge \left(\bigwedge_{k=1}^n y_k\right)\neq 0$. So $x$ is a $\m$-element.
\end{proof}
	
\begin{remark}\label{remuselessincheck}
To check whether an element $x$ of a quantale is a $\m$-element or not, it is unnecessary to consider $1$ among the elements $y_1,\ldots ,y_n$. In fact, it is always useless to add elements $\overline{y}$ that are larger than some $y_i$ that are already being considered. Adding $x$ itself is also unnecessary.
\end{remark}
	
\begin{remark}\label{notalless}
We may ask whether every 
$\m$-element is an essential element. The answer is negative. To see this, let us consider the quantale of all ideals of the ring $\mathds{Z}_{12}$. Let $x:= 3\mathds{Z}_{12}$. The only proper ideals different from $x$ that intersect $x$ are $2 \mathds{Z}_{12}$ and $6 \mathds{Z}_{12}$ (note that $3\mathds{Z}_{12}\wedge 4\mathds{Z}_{12}$=0). So by Remark \ref{remuselessincheck}, we only need to consider $y_1:= 2\mathds{Z}_{12}$ and $y_2:= 6\mathds{Z}_{12}$. It is easy to see that $x\wedge y_1\wedge y_2=6 \mathds{Z}_{12}\neq 0$. So we conclude that $3\mathds{Z}_{12}$ is a $\m$-element. However, $ 3\mathds{Z}_{12}$ is not an essential element because $ 3\mathds{Z}_{12} \wedge 4\mathds{Z}_{12}=0.$
		
Another example is given by the frame $\mathcal{P}(X)$ of the power set of a nonempty set $X$. It is easy to see that every singleton set in $X$ is a $\m$-element. This is not an accidental example, as we shall see later on in Proposition \ref{bpmi}(\ref{atmu}). And when $X$ has more than one element, singleton subsets of $X$ are not essential.
\end{remark}
	
\begin{remark}\label{notallmuel}
It is not true that all elements in a quantale are $\m$-elements. Consider the quantale of all ideals of the ring $\mathds{Z}_{30}$ and the ideal $x:= 2\mathds{Z}_{30}$. If we take $y_1:= 3\mathds{Z}_{30} $ and $y_2:=  5\mathds{Z}_{30} $, then $x\wedge y_1= 6\mathds{Z}_{30}$, $x\wedge y_2= 10\mathds{Z}_{30}$, and $y_1\wedge y_2= 15\mathds{Z}_{30}$. However, $x\wedge (y_1\wedge y_2)=0$. 
		
Another example is given by the frame of power set $\mathcal{P}(X)$ of the set $X=\{1,2,3\}$.The set $x=\{1,2\}$ is not a $\m$-element in $\mathcal{P}(X)$. Indeed, the sets $y=\{1,3\}$ and $z=\{2,3\}$ are such that $y \wedge z =\{3\} \neq \emptyset$, $x \wedge y=\{1\} \neq \emptyset$ and $x\wedge z=\{2\} \neq \emptyset$, but $x \wedge y \wedge z=\emptyset$.
\end{remark}
	
\begin{remark}
Proposition \ref{eim}, Remark \ref{notalless}, and Remark \ref{notallmuel} show that $\m$-elements genuinely extend the notion of essential elements.
\end{remark}
	
We have seen that $\m$-elements generalize the notion of essential elements by considering all finite collections of elements $y_1$, $y_2$, \ldots, $y_n$ rather than just a single element $y$. A natural question arises: can this condition be reduced to just $n=2$? The next proposition confirms that this is indeed possible.
	
\begin{proposition}
	\label{net}
Let $L$ be a quantale and let $a\in L$. The following are equivalent:
\begin{enumerate}
\item $a$ is a $\m$-element;

\item  for every $y$, $z\in L$ such that $y \wedge z\neq 0$, the conditions $a\wedge y \neq 0$ and $a\wedge z\neq 0$ imply that ${a\wedge (y \wedge z) \neq 0}$.	\end{enumerate}
\end{proposition}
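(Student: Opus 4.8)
The plan is to prove the two implications separately, with essentially all of the content in $(2)\Rightarrow(1)$. The implication $(1)\Rightarrow(2)$ is immediate: condition (2) is exactly the instance $n=2$ of Definition \ref{mid}, so it holds whenever $a$ is a $\m$-element.

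For $(2)\Rightarrow(1)$ I would argue by induction on the number $n\geqslant 2$ of elements $y_1,\ldots,y_n$ appearing in the definition. The base case $n=2$ is precisely hypothesis (2). For the inductive step, assume the property holds for some $n\geqslant 2$, and let $y_1,\ldots,y_{n+1}\in L$ satisfy $\bigwedge_{k=1}^{n+1}y_k\neq 0$ and $a\wedge y_k\neq 0$ for every $k$. The key observation is that $\bigwedge_{k=1}^{n}y_k\geqslant\bigwedge_{k=1}^{n+1}y_k\neq 0$, so the shorter collection $y_1,\ldots,y_n$ still satisfies the hypotheses of the inductive assumption, namely $\bigwedge_{k=1}^{n}y_k\neq 0$ and $a\wedge y_k\neq 0$ for each $k\leqslant n$. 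Applying the inductive hypothesis gives $a\wedge\bigl(\bigwedge_{k=1}^{n}y_k\bigr)\neq 0$.

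Now set $y:=\bigwedge_{k=1}^{n}y_k$ and $z:=y_{n+1}$. We have just shown $a\wedge y\neq 0$, while $a\wedge z=a\wedge y_{n+1}\neq 0$ by assumption and $y\wedge z=\bigwedge_{k=1}^{n+1}y_k\neq 0$. Hence condition (2) applies to the pair $y,z$ and yields $a\wedge(y\wedge z)=a\wedge\bigl(\bigwedge_{k=1}^{n+1}y_k\bigr)\neq 0$, which completes the induction and shows that $a$ is a $\m$-element.

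I do not expect a genuine obstacle here: the induction is routine, and the only point needing a moment's care is precisely the monotonicity observation above — that shrinking the full meet of $n+1$ elements to the meet of the first $n$ of them keeps it nonzero — since this is what makes the inductive hypothesis applicable. (One could also invoke Remark \ref{remuselessincheck} to streamline bookkeeping, but it is not needed for the argument.)
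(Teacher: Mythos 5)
Your proof is correct and follows essentially the same route as the paper: the trivial implication $(1)\Rightarrow(2)$, and then induction on $n$ for $(2)\Rightarrow(1)$, using that $\bigwedge_{k=1}^{n}y_k\geqslant\bigwedge_{k=1}^{n+1}y_k\neq 0$ to apply the inductive hypothesis and then condition (2) with $y=\bigwedge_{k=1}^{n}y_k$ and $z=y_{n+1}$. No gaps.
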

	
\begin{proof}
(1) trivially implies (2). We prove (2) implies (1) by induction. Observe that by (2), the property holds for $n=2$. Assume that the property holds for $n$. Suppose $\{y_1,\ldots,y_{n+1}\}$ is a set of $n+1$ elements in $L$ such that $ \bigwedge_{k=1}^{n+1} y_k \neq 0$ and for each $k$ we have $a\wedge y_k\neq 0$. In particular, we  also have $ \bigwedge_{k=1}^{n} y_k \neq 0$, so by inductive hypothesis, we obtain $a\wedge (\bigwedge_{k=1}^{n} y_k ) \neq 0$. Since $(\bigwedge_{k=1}^{n} y_k) \wedge y_{n+1}= \bigwedge_{k=1}^{n+1} y_k \neq 0$ and $a \wedge y_{n+1} \neq 0$, by (2) we conclude that 
\[a \wedge \left(\bigwedge_{k=1}^{n+1} y_k\right)= a \wedge \left(\bigwedge_{k=1}^{n} y_k\right) \wedge y_{n+1}\neq 0.\qedhere\]
\end{proof}
	
Thanks to the above result, we shall henceforth verify the condition of Definition \ref{mid} only for 
$n=2$ in all our proofs.
	
\begin{proposition}\label{bpmi}
In a quantale $L$,  the following hold.
\begin{enumerate}
\item The bottom element is  a $\m$-element.
			
\item\label{temu} For every $a\in L$, we have $a\lm a^\downarrow$. Hence, the top element in $L$ is a $\m$-element.
			
\item\label{aid} Every element in a domain is a $\m$-element.
			
\item The maximal element in a nontrivial local $L$ with compact top element is a \( \m \)-element.
			
\item\label{atmu} Every atom in $L$ is a $\m$-element.
			
			
			
\end{enumerate}
\end{proposition}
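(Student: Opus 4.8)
The plan is to treat the five items one by one, in each case invoking Proposition~\ref{net} so that only the case $n=2$ of Definition~\ref{mid} needs checking. Items (1) and (2) are formal. For (1), the bottom element $0$ satisfies $0\wedge y=0$ for every $y\in L$, so the hypothesis ``$0\wedge y_k\neq 0$'' in Proposition~\ref{net}(2) can never be met and the required implication holds vacuously. For (2), if $y,z\in a^{\downarrow}$ then $y\leq a$ and $z\leq a$, hence $a\wedge y=y$, $a\wedge z=z$, and the conclusion $a\wedge(y\wedge z)=y\wedge z$ is just the standing hypothesis $y\wedge z\neq 0$; thus $a$ is a $\m$-element of $a^{\downarrow}$, i.e. $a\lm a^{\downarrow}$. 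Taking $a=1$ and noting $1^{\downarrow}=L$ gives that $1$ is a $\m$-element of $L$.

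For (5), let $a$ be an atom and suppose $y\wedge z\neq 0$, $a\wedge y\neq 0$, $a\wedge z\neq 0$. Since $0<a\wedge y\leq a$ and $a$ is minimal above $0$, we get $a\wedge y=a$, i.e. $a\leq y$; likewise $a\leq z$. Hence $a\leq y\wedge z$ and $a\wedge(y\wedge z)=a\neq 0$. For (3), I would first recall the inequality $xy\leq x\wedge y$ valid in any integral quantale (since $xy\leq x\cdot 1=x$ and $xy\leq 1\cdot y=y$). In a domain, $x\neq 0$ and $y\neq 0$ force $xy\neq 0$, so $x\wedge y\geq xy\neq 0$; this shows every nonzero element is essential, hence a $\m$-element by Proposition~\ref{eim}, while $0$ is a $\m$-element by (1).

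The substantive item is (4). I would first establish the auxiliary fact: in a nontrivial local quantale $L$ with compact top element, every proper element $x$ lies below the unique maximal element $m$. Consider $P=\{u\in L\mid x\leq u<1\}$; it is nonempty, and for any chain $C\subseteq P$ the join $\bigvee C$ is again proper, since otherwise compactness of $1$ would give $1\leq u_1\vee\cdots\vee u_n$ with $u_i\in C$, and a finite join over a chain equals its largest term, forcing some $u_i=1$ — a contradiction. By Zorn's lemma $P$ has a maximal element, which is a maximal element of $L$ and hence equals $m$ by locality, so $x\leq m$. Granting this, (4) follows easily: suppose $y\wedge z\neq 0$, $m\wedge y\neq 0$, $m\wedge z\neq 0$. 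If both $y<1$ and $z<1$, then $y,z\leq m$, so $m\wedge(y\wedge z)=y\wedge z\neq 0$; if instead $y=1$ (the case $z=1$ being symmetric), then $m\wedge(y\wedge z)=m\wedge z\neq 0$.

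The main obstacle is exactly this auxiliary fact for (4): the conclusion genuinely uses compactness of the top element, since in a local quantale lacking it a proper element need not sit below the maximal element, and it is the Zorn-plus-compactness argument above that supplies the missing containment. Everything else reduces to the definitions together with Proposition~\ref{net}.
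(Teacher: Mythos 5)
Your proposal is correct, and for items (1), (2), (3) and (5) it is in substance the paper's argument: (1) is vacuous, (5) uses exactly the same atom computation ($a\wedge x_k=a$), and (2)--(3) amount to observing that the element in question is essential (in $a^{\downarrow}$, resp.\ in a domain via $xy\leq x\wedge y$) and invoking Proposition~\ref{eim} -- the paper simply states this without spelling out the inequality, while you do, which is fine. The genuine divergence is item (4): the paper disposes of it by citing an external result, \cite[Lemma 2.3(2)]{GD25}, which gives that the maximal element of a nontrivial local quantale with compact top is essential, and then applies Proposition~\ref{eim}; you instead give a self-contained argument, proving via Zorn's lemma plus compactness of $1$ (finite joins over a chain collapse to the largest term) that every proper element lies below the unique maximal element $m$, and then checking the $n=2$ condition of Proposition~\ref{net} directly by splitting into the cases $y,z<1$ and $y=1$ (or $z=1$). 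Your route buys independence from the cited lemma at the cost of a page of standard Zorn bookkeeping; in effect you have reproved the essentiality of $m$ (every proper nonzero $y$ satisfies $y\leq m$, so $m\wedge y=y\neq 0$), so you could even shorten your case analysis by stating that and then quoting Proposition~\ref{eim}, which would align your proof with the paper's completely. The Zorn argument itself is sound, including the use of nontriviality to guarantee a proper element exists and of locality to identify the maximal element produced with $m$.
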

	
\begin{proof}
(1) is obvious, as the condition of being a $\m$-element is empty for the bottom element.
		
The claims (2) and (3) are immediate consequences of Proposition \ref{eim}, on the other hand, (4) follows from \cite[Lemma 2.3(2)]{GD25} and Proposition \ref{eim}. 
To show (5), suppose $a$ is an atom in $L$. Let $x_1$ and $x_2$ be nonzero elements in $L$ such that $x_1\wedge x_2\neq 0$ and $a\wedge x_k\neq 0$ for each $k$. Since $a$ is an atom, this implies that $a\wedge x_k =a$, for each $k$. Hence $a\wedge \left( x_1\wedge x_2 \right)=a\neq0$, proving that $a$ is a $\m$-element.
\end{proof}
	
\begin{remark}
Although 
$\m$-elements are a generalization of essential elements, they interestingly include all atoms, which are, in general, very far from being essential.
\end{remark}
	
\begin{proposition}
Let $L$ be a quantale. The meet of two \( \m \)-elements in \( L \)
is a \( \m \)-element.
\end{proposition}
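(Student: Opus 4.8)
The plan is to reduce, via Proposition~\ref{net}, to checking the two-element version of the condition in Definition~\ref{mid}, and then to invoke the $\m$-element hypothesis twice in succession---once for each factor. Let $a$ and $b$ be $\m$-elements in $L$, and suppose $y,z\in L$ satisfy $y\wedge z\neq 0$, $(a\wedge b)\wedge y\neq 0$ and $(a\wedge b)\wedge z\neq 0$; the goal is to show $(a\wedge b)\wedge(y\wedge z)\neq 0$.

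First I would note that $b\wedge y\neq 0$ and $b\wedge z\neq 0$, simply because these meets dominate $(a\wedge b)\wedge y$ and $(a\wedge b)\wedge z$ respectively. Combined with $y\wedge z\neq 0$, the fact that $b$ is a $\m$-element (in the $n=2$ form) then gives $b\wedge y\wedge z\neq 0$.

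Next I would apply the $\m$-element condition for $a$ to the pair of elements $b\wedge y$ and $b\wedge z$. Their meet is $b\wedge y\wedge z$, which is nonzero by the previous step; moreover $a\wedge(b\wedge y)=(a\wedge b)\wedge y\neq 0$ and $a\wedge(b\wedge z)=(a\wedge b)\wedge z\neq 0$ by hypothesis. Hence $a\wedge(b\wedge y)\wedge(b\wedge z)\neq 0$, and by associativity and commutativity of $\wedge$ this is exactly $(a\wedge b)\wedge(y\wedge z)\neq 0$, as required.

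I do not anticipate any real obstacle: the argument is essentially a double bookkeeping of meets, the only mild subtlety being the monotonicity observation that secures $b\wedge y\neq 0$ and $b\wedge z\neq 0$ before $b$'s hypothesis can be used. The reasoning is visibly symmetric in $a$ and $b$, and it extends---either verbatim or by an immediate induction, again using Proposition~\ref{net}---to the meet of any finite family of $\m$-elements.
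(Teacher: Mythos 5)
Your argument is correct and coincides with the paper's own proof: first use monotonicity to see $b\wedge y\neq 0$ and $b\wedge z\neq 0$, apply the $\m$-condition for $b$ to get $b\wedge y\wedge z\neq 0$, then apply the $\m$-condition for $a$ to the pair $b\wedge y$, $b\wedge z$. The only cosmetic difference is that the paper states the check directly for two elements (justified by Proposition~\ref{net}) rather than invoking that proposition explicitly.
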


\begin{proof}
Suppose $a$ and $b$ are $\m$-elements in $L$. Let $x_1$ and $x_2$ be nonzero elements in $L$ such that $x_1\wedge x_2\neq 0$ and $(a\wedge b)\wedge x_k\neq 0$ for each $k$. This, in particular, implies that  $b\wedge x_k\neq 0$ for each $k$. Since $b$ is a $\m$-element, we have $b\wedge \left(  x_1\wedge x_2\right)\neq 0$. Now consider the nonzero elements $b\wedge x_1$ and $b\wedge x_2$ in $L$. Observe that \[\bigwedge_{k=1}^2(b\wedge x_k)=b\wedge \left(  x_1\wedge x_2\right)\neq 0,\]
and $a\wedge(b\wedge x_k)\neq 0$ for each $k$. Since $a$ is a $\m$-element, this implies that
\[(a\wedge b)\wedge \left(  x_1\wedge x_2\right)=a\wedge \left(  \bigwedge_{k=1}^2 (b\wedge x_k)\right)\neq 0,\]
proving $a\wedge b$ is a $\m$-element.
\end{proof}
	
\begin{remark}\label{remoperations}
Similar to essential elements, here are instances where $\m$-elements fail to be closed under certain operations.	\begin{itemize}
\item[$\bullet$] An arbitrary meet of $\m$-elements may not be a $\m$-element. For example, in the quantale of all ideals of the ring $\mathds{Z}$ of integers, for each prime $p$, the ideal $p\mathds{Z}$ is a $\m$-element by Proposition \ref{bpmi}(\ref{aid}).   However, $\bigwedge_{p}p\mathds{Z}=0$.
			
\item[$\bullet$]  Join of two $\m$-elements may also not be a $\m$-element. Consider two nonempty subsets $A$ and $B$ in the quantale of power set $\mathcal{P}(X)$ of a set $X$. Assume that $A\wedge B\neq \emptyset$ and that $A$ and $B$ are not contained in the other. Let $x\in A$ but $x\notin B$, and let $y\in B$ but $y\notin A$. By Proposition \ref{bpmi}(\ref{atmu}), each $\{x\}$ and $\{y\}$ is a $\m$-element. However, $\{x,y\}$ is not a $\m$-element. Indeed, $A\wedge B\neq \emptyset$, $\{x,y\}\wedge A\neq \emptyset$ and $\{x,y\}\wedge B\neq \emptyset$, but $\{x,y\}\wedge (A\wedge B)=\emptyset$.
			
\item[$\bullet$] Complements of $\m$-elements may not be $\m$-elements. In fact, even complements of atoms may not be $\m$-elements. Consider the quantale of power set $\mathcal{P}(X)$ of the set $X=\{1,2,3\}$. The singleton $\{1\}$ is an atom and thus a $\m$-element. But its complement $\{2,3\}$ is not a $\m$-element: consider $y=\{1,2\}$ and $z=\{1,3\}$. Notice that $\{2,3\}$ is also a dual atom which is not a $\m$-element.
			
\item[$\bullet$] The product of two $\m$-elements may not be a $\m$-element; see Example \ref{exprodmuel}.
\end{itemize}
\end{remark}
	
It is clear that if $a$ is a $\m$-element in $L$ and $a<b$, then $b$ may not be a $\m$-element. Indeed, consider again, as in Remark \ref{remoperations}, the quantale of power set $\mathcal{P}(X)$ of $X=\{1,2,3\}$. The singleton set $\{2\}$ is a $\m$-element, but $\{2,3\}$ is not.

However, we have the following positive result for upsets, and the proof is obvious.

\begin{proposition}
Let $L$ be a frame and $a$, $c$, $d\in L$. If $a\lm c^\uparrow$ and $c\leq d\leq a$, then $a\lm d^\uparrow$.
\end{proposition}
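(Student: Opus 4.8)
The plan is to reduce to the case $n=2$ via Proposition \ref{net} and then transport the hypothesis along the inclusion $d^\uparrow\subseteq c^\uparrow$.

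First I would record the structural facts. Since $c\leq d$, we have $d^\uparrow\subseteq c^\uparrow$, and for elements lying above $d$ (a fortiori above $c$) the finite meets computed in $L$, in $c^\uparrow$, and in $d^\uparrow$ all coincide. The only difference between these two frames is the bottom element --- $c$ in $c^\uparrow$, $d$ in $d^\uparrow$ --- so that ``$a$ is a $\m$-element of $d^\uparrow$'' means the defining condition of Definition \ref{mid} holds with $0$ read as $d$.

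By Proposition \ref{net} it suffices to take $y,z\in d^\uparrow$ with $y\wedge z\neq d$, $a\wedge y\neq d$ and $a\wedge z\neq d$, and to prove that $a\wedge(y\wedge z)\neq d$. The elementary point is that $a$, $y$ and $z$ all lie above $d$, so every meet built from them lies above $d$, hence above $c$; together with the three assumptions this yields $y\wedge z\neq c$, $a\wedge y\neq c$ and $a\wedge z\neq c$. As $y,z\in c^\uparrow$ and $a$ is a $\m$-element of $c^\uparrow$, its defining property gives $a\wedge(y\wedge z)\neq c$.

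The step I expect to be the main obstacle is the last one: upgrading $a\wedge(y\wedge z)\neq c$ to the required $a\wedge(y\wedge z)\neq d$. When $c=d$ there is nothing more to prove, since $c^\uparrow=d^\uparrow$; so all the content lies in the case $c<d$, where one has to exclude $a\wedge(y\wedge z)=d$, which the argument above --- detecting only the bottom $c$ of $c^\uparrow$ --- does not do by itself. A natural attempt is to push the data into $d^\uparrow$ along the frame surjection $x\mapsto x\vee d\colon c^\uparrow\to d^\uparrow$, but since $d\leq a$ this map fixes $a$, $y$, $z$ and $a\wedge(y\wedge z)$ while collapsing $[c,d]$ onto $d$, so it cannot separate $d$ from smaller elements and the gap reappears. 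This suggests that closing the argument needs an extra hypothesis, or else the down-set formulation ($a\lm c^\downarrow$ with $a\leq d\leq c$ implying $a\lm d^\downarrow$), where the bottom element $0$ is unchanged and the inclusion argument goes through with no final step.
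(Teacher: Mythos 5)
You have not missed an idea: the step you flag as the main obstacle is genuinely impossible to close, and in fact the proposition as printed is false, so there is no argument to compare yours against (the paper gives none, asserting only that the proof is obvious --- presumably the very transport argument you ran, which detects only the bottom $c$ of $c^\uparrow$ and therefore says nothing about the bottom $d$ of $d^\uparrow$ once $c<d$). For an explicit counterexample, let $\mathbf{2}=\{0<1\}$ and $\mathbf{3}=\{0<m<1\}$ be chains and let $L=\mathbf{2}\times\mathbf{2}\times\mathbf{3}$ with the componentwise order; this is a finite distributive complete lattice, hence a frame. Put $c=(0,0,0)$, $d=(0,0,m)$, $a=(1,1,m)$, so that $c\leq d\leq a$. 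Every nonzero $(u,v,t)\in L$ satisfies $a\wedge(u,v,t)=(u,v,m\wedge t)\neq 0$, so $a$ is essential in $L=c^\uparrow$ and thus $a\lm c^\uparrow$ by Proposition \ref{eim}. But in $d^\uparrow$ take $y=(1,0,1)$ and $z=(0,1,1)$: then $y\wedge z=(0,0,1)\neq d$, $a\wedge y=(1,0,m)\neq d$ and $a\wedge z=(0,1,m)\neq d$, while $a\wedge y\wedge z=(0,0,m)=d$ is the bottom of $d^\uparrow$; hence $a$ is not a $\m$-element of $d^\uparrow$. The same phenomenon occurs in $\operatorname{Open}(\mathds{R})$ with $c=\emptyset$, $a=\mathds{R}\setminus\{0\}$ (dense, hence essential) and $d=\mathds{R}\setminus\{0,1,2\}$: there $d^\uparrow$ is isomorphic to the power set of a three-element set and $a$ corresponds to a two-element subset, which is not a $\m$-element by Remark \ref{notallmuel}.

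Two further points in your proposal are also correct. The surjection $x\mapsto x\vee d$ cannot repair the argument, exactly for the reason you give; more structurally, the counterexample shows that $a$ can be essential in $c^\uparrow$ while being neither essential nor irreducible in $d^\uparrow$, which is what the conclusion would force in view of Theorem \ref{theorcharact}. And the downset formulation you suggest instead ($a\lm c^\downarrow$ with $a\leq d\leq c$ implies $a\lm d^\downarrow$) does hold trivially, since there the bottom element $0$ is unchanged and any test pair in $d^\downarrow$ is already a test pair in $c^\downarrow$. So the right conclusion here is not a cleverer proof but a correction of the statement (or an added hypothesis ensuring that meets over $d$ cannot collapse onto $d$), and your diagnosis identifies precisely where and why the claimed ``obvious'' proof breaks down.
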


\begin{remark}\label{remrem}
Unlike what happens for essential elements (see \cite[Exercise 4.8]{Cal00}), it is not true that if $b\lm a^\uparrow$ (with $a\leq b$), then $b$ needs to be a $\m$-element in the frame $L$. Consider again the frame of power set $\mathcal{P}(X)$ of the set $X=\{1,2,3\}$. Then $\{2,3\}$ is an atom in $\{2\}^\uparrow$ and thus a $\m$-element in $\{2\}^\uparrow$. But $\{2,3\}$ is not a $\m$-element in $\mathcal{P}(X)$.
\end{remark}
	
\begin{proposition}
Let \( \varphi \colon L \to L' \) be an injective quantale homomorphism. If $y$ is a $\m$-element in $L'$, then \( \varphi\inv(y)\) is a $\m$-element in $L$.
\end{proposition}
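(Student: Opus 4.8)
The plan is to take $\varphi\inv$ to be the \emph{contraction} of $\varphi$, that is, its right adjoint; this exists because $\varphi$ preserves arbitrary joins, and is given by $\varphi\inv(w)=\bigvee\{x\in L\mid\varphi(x)\leq w\}$, characterised by the adjunction $\varphi(x)\leq w\iff x\leq\varphi\inv(w)$. I would first record the consequences to be used: $\varphi\inv$ preserves arbitrary meets; $\varphi(0)=0$ (it is an empty join); $\varphi\circ\varphi\inv\leq\mathrm{id}_{L'}$; and, since $\varphi$ is injective, $\varphi\inv\circ\varphi=\mathrm{id}_{L}$. By Proposition~\ref{net} it suffices to verify the defining condition for two elements.

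So suppose $y_1,y_2\in L$ satisfy $y_1\wedge y_2\neq 0$, $\varphi\inv(y)\wedge y_1\neq 0$ and $\varphi\inv(y)\wedge y_2\neq 0$; the aim is $\varphi\inv(y)\wedge y_1\wedge y_2\neq 0$. The idea is to transport the hypotheses to $L'$, use that $y$ is a $\m$-element there, and transport back. Since $\varphi$ is injective and preserves $0$, the element $\varphi\bigl(\varphi\inv(y)\wedge y_1\bigr)$ is nonzero, and it lies below $\varphi(\varphi\inv(y))\wedge\varphi(y_1)\leq y\wedge\varphi(y_1)$, so $y\wedge\varphi(y_1)\neq 0$; symmetrically $y\wedge\varphi(y_2)\neq 0$; and $0\neq\varphi(y_1\wedge y_2)\leq\varphi(y_1)\wedge\varphi(y_2)$ gives $\varphi(y_1)\wedge\varphi(y_2)\neq 0$. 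Applying the $\m$-property of $y$ in $L'$ to the pair $\varphi(y_1),\varphi(y_2)$ yields $y\wedge\varphi(y_1)\wedge\varphi(y_2)\neq 0$. Transporting back, since $\varphi\inv$ preserves meets and $\varphi\inv\varphi=\mathrm{id}_{L}$, we get
\[
\varphi\inv\bigl(y\wedge\varphi(y_1)\wedge\varphi(y_2)\bigr)=\varphi\inv(y)\wedge y_1\wedge y_2,
\]
so the whole argument reduces to showing the left-hand side is nonzero.

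Hence the one genuinely delicate step is this: the contraction of the nonzero element $w:=y\wedge\varphi(y_1)\wedge\varphi(y_2)$ of $L'$ must be shown to be nonzero. This is precisely where injectivity of $\varphi$ has to be used in an essential way, since a contraction can in general collapse a nonzero element to $0$. Concretely, it suffices to exhibit some nonzero $x\in L$ with $\varphi(x)\leq w$, because then $x=\varphi\inv\varphi(x)\leq\varphi\inv(w)$ forces $\varphi\inv(w)\neq0$; the natural attempt is to build such an $x$ from $\varphi\inv(y)\wedge y_1$ together with a further application of the $\m$-property of $y$ to pin down its interaction with $\varphi(y_2)$. I expect locating this witness to be the crux of the proof; everything else is formal manipulation of the adjunction $\varphi\dashv\varphi\inv$.
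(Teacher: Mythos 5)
Your reduction to $n=2$, the forward transport of the hypotheses (injectivity plus $\varphi(0)=0$ and $\varphi\circ\varphi\inv\leq\mathrm{id}$ give $y\wedge\varphi(y_1)\neq 0$, $y\wedge\varphi(y_2)\neq 0$ and $\varphi(y_1)\wedge\varphi(y_2)\neq 0$), and the appeal to the $\m$-property of $y$ in $L'$ all follow the same route as the paper, and that part is correct. But your proposal stops at the decisive step: you never exhibit a nonzero $x\in L$ with $\varphi(x)\leq w:=y\wedge\varphi(y_1)\wedge\varphi(y_2)$, i.e.\ you never show $\varphi\inv(w)\neq 0$; you only announce that locating such a witness is ``the crux''. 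So what you have is an unfinished sketch rather than a proof. For comparison, the paper's own proof concludes $x\wedge x_1\wedge x_2\neq 0$ directly from $y\wedge\varphi(x_1)\wedge\varphi(x_2)\neq 0$, which is exactly the inference you left open (it would require $\varphi\inv$ to preserve binary meets and to send nonzero elements to nonzero elements); you have correctly isolated the only nontrivial point, but you have not supplied an argument for it.

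Moreover, the gap is not a formality that ``a further application of the $\m$-property'' will close: from the stated hypotheses the witness need not exist, since an injective quantale homomorphism is an order embedding but need not preserve binary meets, and nothing forces the image of $\varphi$ to contain a nonzero element below $w$. Concretely, let $S=\{a_1,a_2,b,c\}$, let $K$ be the closure under unions and intersections of $\{\{a_1\},\{a_2\},\{b,c\},\{a_1,a_2,c\}\}$ in the power set of $S$ (note $\{b\}\notin K$), and let $L'=K\cup\{1\}$ with a new top $1$ and multiplication $uv=0$ whenever $u,v\neq 1$; this is an integral commutative quantale in which $y:=\{a_1,a_2,c\}$ is essential, hence a $\m$-element. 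Let $L\subseteq L'$ consist of $0$, $1$ and all unions of $\{a_1\},\{a_2\},\{b,c\},y,\{a_1,b,c\},\{a_2,b,c\}$; then $L$ is closed under joins and multiplication, so the inclusion $\varphi$ is an injective quantale homomorphism, and $\varphi\inv(y)=y$ under either reading of $\varphi\inv$. In $L$ we have $y\wedge\{a_1,b,c\}=\{a_1\}\neq 0$, $y\wedge\{a_2,b,c\}=\{a_2\}\neq 0$ and $\{a_1,b,c\}\wedge\{a_2,b,c\}=\{b,c\}\neq 0$, yet $y\wedge\{a_1,b,c\}\wedge\{a_2,b,c\}=0$, because no nonzero element of $L$ lies below $\{c\}$. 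Hence $\varphi\inv(y)$ fails to be a $\m$-element of $L$, so no amount of adjunction bookkeeping can complete your plan; one needs an additional hypothesis, for instance that $y$ lie in the image of $\varphi$ and that $\varphi$ preserve binary meets (automatic when $L'$ is a frame, since $ab\leq a\wedge b$ then forces $\varphi(a\wedge b)=\varphi(a)\wedge\varphi(b)$), under which the missing step becomes immediate.
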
 
	
\begin{proof}
Suppose \( x := \varphi\inv(y)\). Let $x_1$ and $ x_2$  be nonzero elements in \( L \) such that 
$
x_1\wedge x_2 \neq 0$, and  for each $k$, suppose $x \wedge x_k \neq 0$.
Since $\varphi$ is injective, we have the following conclusions:
\begin{itemize}
\item[$\bullet$] $\varphi(x_k)\neq 0$ for each $k$; 
			
\item[$\bullet$] $0\neq \varphi(x_1\wedge x_2)\leqslant \varphi(x_k)\wedge \varphi(x_2)$;
			
\item[$\bullet$] $0\neq \varphi(x\wedge x_k)\leqslant \varphi(x)\wedge \varphi(x_k)=y\wedge \varphi(x_k)$ for each $k$.
\end{itemize}
Since $y$ is a $\m$-element in $L'$, we have  $y\wedge \left( \varphi(x_1)\wedge \varphi(x_2)  \right)\neq 0,$ and that gives
\[\varphi\inv(y)\wedge \bigwedge_{k=1}^2 (\varphi\inv(\varphi(x_k)))=x\wedge \left(  x_1\wedge x_2\right)\neq 0,\]
proving the claim.
\end{proof}  
	
In the last result, we have given a sufficient condition for contractions of $\m$-elements to be $\m$-elements. The next proposition is about characterization of certain $\m$-elements  extensions, and this result extends \cite[Lemma 4.10]{DB23}. 
	
\begin{proposition}
Let $L$ be a frame and $a\in L$. Consider the map $\kappa_a\colon L \to a^\uparrow$ defined by \[\kappa_a(x):=x\vee a.\] The following properties are equivalent:
\begin{enumerate}
\item $\kappa_a$ preserves essential elements.

\item $a$ is regular.

\item $\kappa_a$ preserves essential elements and $\m$-elements.
\end{enumerate}
\end{proposition}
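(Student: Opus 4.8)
The plan is to establish the cycle $(3)\Rightarrow(1)\Rightarrow(2)\Rightarrow(3)$. The implication $(3)\Rightarrow(1)$ is immediate, since $(3)$ asserts in particular that $\kappa_a$ preserves essential elements. Throughout I would use the following elementary facts, valid because $L$ is a frame and hence $a^\perp=\bigvee\{x\mid x\wedge a=0\}$ is genuinely the pseudo-complement of $a$: an element $x$ is essential iff $x^\perp=0$; one has $a\leq a^{\perp\perp}$ and $a^{\perp\perp}\wedge a^\perp=0$; for any $w$, $w\wedge a^\perp=0$ iff $w\leq a^{\perp\perp}$; and $(p\vee q)^\perp=p^\perp\wedge q^\perp$. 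I would also record two ``translation'' facts, which hold since $a$ is the bottom of the frame $a^\uparrow$ and meets and joins in $a^\uparrow$ are inherited from $L$: for $y\geq a$ one has $(x\vee a)\wedge y=(x\wedge y)\vee a$, so $(x\vee a)\wedge y=a$ iff $x\wedge y\leq a$.

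For $(1)\Rightarrow(2)$, I would feed $\kappa_a$ the test element $a\vee a^\perp$, which is essential in $L$ because $(a\vee a^\perp)^\perp=a^\perp\wedge a^{\perp\perp}=0$. By hypothesis $\kappa_a(a\vee a^\perp)=a\vee a^\perp$ is essential in $a^\uparrow$. Applying this to the element $w:=a^{\perp\perp}\in a^\uparrow$ and computing $(a\vee a^\perp)\wedge a^{\perp\perp}=(a\wedge a^{\perp\perp})\vee(a^\perp\wedge a^{\perp\perp})=a\vee 0=a$, essentiality in $a^\uparrow$ forces $a^{\perp\perp}=a$, i.e.\ $a$ is regular.

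For $(2)\Rightarrow(3)$, assume $a$ regular. The key observation is that regularity makes the condition ``$u\not\leq a$'' interchangeable with ``$u\wedge a^\perp\neq 0$'': if $u\wedge a^\perp=0$ then $u\leq a^{\perp\perp}=a$, while conversely $u\wedge a^\perp\neq 0$ forces $u\not\leq a$ since $u\wedge a^\perp\leq a^\perp$ and $a\wedge a^\perp=0$. Preservation of essential elements is then quick: if $x^\perp=0$ and $w\geq a$ with $(x\vee a)\wedge w=a$, i.e.\ $x\wedge w\leq a$, then $(w\wedge a^\perp)\wedge x\leq a\wedge a^\perp=0$, so $w\wedge a^\perp\leq x^\perp=0$, hence $w\leq a^{\perp\perp}=a$; thus $x\vee a$ is essential in $a^\uparrow$. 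For preservation of $\m$-elements, let $x$ be a $\m$-element in $L$; by Proposition \ref{net} it suffices to check the defining condition for $n=2$ in $a^\uparrow$. So take $y,z\geq a$ with $y\wedge z\neq a$, $(x\vee a)\wedge y\neq a$, $(x\vee a)\wedge z\neq a$; by the translation fact and the key observation these read $y\wedge z\wedge a^\perp\neq 0$, $x\wedge y\wedge a^\perp\neq 0$, $x\wedge z\wedge a^\perp\neq 0$. Setting $y':=y\wedge a^\perp$ and $z':=z\wedge a^\perp$ in $L$, one gets $y'\wedge z'=y\wedge z\wedge a^\perp\neq 0$, $x\wedge y'\neq 0$ and $x\wedge z'\neq 0$, so since $x$ is a $\m$-element in $L$ we obtain $x\wedge y'\wedge z'\neq 0$, that is $x\wedge y\wedge z\wedge a^\perp\neq 0$, hence $x\wedge y\wedge z\not\leq a$, hence $(x\vee a)\wedge y\wedge z\neq a$. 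This shows $x\vee a$ is a $\m$-element in $a^\uparrow$, completing $(2)\Rightarrow(3)$.

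The main obstacle is choosing the right test element in $(1)\Rightarrow(2)$ — namely $a\vee a^\perp$, evaluated at $w=a^{\perp\perp}$ — and, in $(2)\Rightarrow(3)$, noticing that meeting everything with $a^\perp$ simultaneously translates all the $a^\uparrow$-level non-triviality conditions into honest non-triviality conditions in $L$, so that the hypothesis that $x$ is a $\m$-element of $L$ becomes applicable; after that the argument is routine frame distributivity. One should also verify at the outset that $a^\uparrow$ is indeed a frame with bottom $a$ and with meets and joins computed as in $L$, which is precisely where the standing restriction to frames is used.
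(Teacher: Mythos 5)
Your proof is correct, but it follows a genuinely different (and more self-contained) route than the paper. The paper does not prove the equivalence of (1) and (2) at all: it cites \cite[Lemma 4.10]{DB23} for that, and only supplies the argument that regularity of $a$ forces $\kappa_a$ to preserve $\m$-elements. You instead prove (1) $\Rightarrow$ (2) directly, via the test element $a\vee a^{\perp}$ (essential in $L$ since $(a\vee a^{\perp})^{\perp}=a^{\perp}\wedge a^{\perp\perp}=0$) evaluated at $a^{\perp\perp}\in a^{\uparrow}$, and you also reprove the essential-preservation half of (2) $\Rightarrow$ (3) rather than outsourcing it. For the $\m$-preservation step, the paper reformulates regularity as ``$l\leqslant a$ iff every $w$ with $w\wedge a=0$ has $w\wedge l=0$'' and then runs an existential-witness argument with a two-case split on whether $x\wedge w^{y,z}$ vanishes, building composite test elements such as $(w^{x,y}\wedge y)\vee w^{y,z}$; your version collapses all of this into the single observation that, for regular $a$, ``$u\not\leqslant a$'' is equivalent to ``$u\wedge a^{\perp}\neq 0$,'' so that meeting $y$ and $z$ with $a^{\perp}$ turns the nondegeneracy conditions in $a^{\uparrow}$ into honest ones in $L$, where the $\m$-property of $x$ applies at once (via Proposition \ref{net}). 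What your approach buys is a shorter, case-free argument and independence from the external lemma; what the paper's buys is that its witness-style argument exhibits explicitly the element $w$ with $w\wedge a=0$ and $w\wedge x\wedge y\wedge z\neq 0$, staying closer to the formulation of regularity used in \cite{DB23}. All the frame facts you invoke ($a^{\perp}$ is the pseudo-complement, $(p\vee q)^{\perp}=p^{\perp}\wedge q^{\perp}$, $w\wedge a^{\perp}=0$ iff $w\leqslant a^{\perp\perp}$, and the computation of meets and joins in $a^{\uparrow}$) do hold by frame distributivity, so there is no gap.
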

	
\begin{proof}
It has been shown in \cite[Lemma 4.10]{DB23} that $\kappa_a$ preserves essential elements if and only if $a$ is regular. Therefore, it remains for us to show that if $a$ is regular then $\kappa_a$ also preserves $\mu$-elements. So, let $x\in L$ be a $\m$-element in $L$; we prove that $x\vee a$ is a $\m$-element in $a^\uparrow$. Let $y> a$ and $z> a$ such that $y\wedge z>a$, $(x\vee a)\wedge y> a$ and $(x\vee a)\wedge z> a$. Notice that $(x\vee a)\wedge y> a$ is equivalent to $(x\wedge y)\vee a > a$, which is equivalent to $x\wedge y \not\leqslant a$, and analogously for $z$ in place of $y$. We need to show that $(x\vee a)\wedge y\wedge z> a$, which is equivalent to $x\wedge y\wedge z\not\leqslant a$. By assumption $a$ is regular, so $a^{\perp\perp}\leq a$. It is easy to see that 
$$a^{\perp\perp}=\bigvee \{l\in L \;\;|\;\; \forall w\in L \;\;(w\wedge a=0 \,\Rightarrow\, w\wedge l=0)\}.$$
So, $a^{\perp\perp}\leq a$ means that, for every $l\in L$, the condition $\forall w\in L \;\;(w\wedge a=0 \,\Rightarrow\, w\wedge l=0)$ implies that $l\leq a$. And of course, we also have that if $l\leq a$, then $\forall w\in L \;\;(w\wedge a=0 \,\Rightarrow\, w\wedge l=0)$. The thesis $x\wedge y\wedge z\not\leqslant a$ is thus equivalent to finding some $w\in L$ such that $w\wedge a=0$, but $w\wedge x\wedge y\wedge z\neq 0$.
		
Since $y\wedge z \not\leqslant a$, there needs to exist some $w^{y,z}\in L$ such that $w^{y,z}\wedge a=0$, but $w^{y,z}\wedge y\wedge z\neq 0$. Analogously, since $x\wedge y\not\leqslant a$, there exists some $w^{x,y}\in L$ such that $w^{x,y}\wedge a=0$ but $w^{x,y}\wedge x\wedge y\neq 0$, and there also exists some analogous $w^{x,z}$. Now, either $x\wedge w^{y,z}\neq 0$ or $x\wedge w^{y,z}= 0$.
If $x\wedge w^{y,z}\neq 0$, consider the elements $w^{y,z},y,z$. We have that $x\wedge y\neq 0$ because otherwise $x\wedge y\leq a$, and analogously $x\wedge z\neq 0$. Since $w^{y,z}\wedge y\wedge z\neq 0$ and $x$ is a $\m$-element in $L$, we conclude that $x\wedge w^{y,z}\wedge y\wedge z\neq 0$. So $w=w^{y,z}$ works. 
If instead $x\wedge w^{y,z}= 0$, consider the elements $(w^{x,y}\wedge y)\vee w^{y,z}$ and $(w^{x,z}\wedge z)\vee w^{y,z}$. We have that,
$$((w^{x,y}\wedge y)\vee w^{y,z})\wedge ((w^{x,z}\wedge z)\vee w^{y,z})\geqslant w^{y,z} \wedge w^{y,z}=w^{y,z}\neq 0.$$
Moreover $x\wedge ((w^{x,y}\wedge y)\vee w^{y,z})=x\wedge w^{x,y}\wedge y \neq 0$, and analogously $x\wedge ((w^{x,z}\wedge z)\vee w^{y,z})=x\wedge w^{x,z}\wedge z\neq 0$. Since $x$ is a $\m$-element, we conclude that
$$x\wedge ((w^{x,y}\wedge y)\vee w^{y,z})\wedge ((w^{x,z}\wedge z)\vee w^{y,z})\neq 0,$$
and the left-hand side is equal to $x\wedge w^{x,y}\wedge y\wedge x\wedge w^{x,z}\wedge z$, which means that
$$w^{x,y}\wedge w^{x,z}\wedge x \wedge y \wedge z\neq 0.$$
So $w=w^{x,y}\wedge w^{x,z}$ works.
\end{proof}
	
\begin{proposition}\label{kp1}
Suppose $L$ is a quantale.
\begin{enumerate}
\item If $n,$ $n'\in L$ such that $n\lm n'^\downarrow$ and $n'$ is a $\m$-element of $L$, then $n$ is a $\m$-element of $L$.
			
\item If \( a_1\), \(a_2\), \(b_1\), \(b_2 \in L\)  such that \( a_1 \lm b_1^\downarrow \) and \( a_2 \lm b_2^\downarrow \), then \( (a_1 \wedge a_2) \lm (b_1 \wedge b_2)^\downarrow \).
			
\item\label{pp1} If $a$ is a $\m$-element in $L$, then $(a\wedge b)\lm b^\downarrow$, for every $b\in L$.
			
\item Suppose $L$ is a frame. Let $a\in L$ and $b$ be a pseudo-complement of  $a$ in $L$. If $c\in L$ is a $\m$-element in $L$, then $(b\vee c)\lm b^\uparrow$.   
\end{enumerate}
\end{proposition}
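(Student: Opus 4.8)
The plan is to reduce every verification, via Proposition~\ref{net}, to the case $n=2$, and then in each part to move elements between $L$ and the relevant sublattice ($n'^{\downarrow}$, $b_i^{\downarrow}$, or $b^{\uparrow}$) by meeting them with one suitable fixed element. For \textbf{(1)}: take $y,z\in L$ with $y\wedge z\neq 0$, $n\wedge y\neq 0$ and $n\wedge z\neq 0$, and replace them by $n'\wedge y$ and $n'\wedge z$, which lie in $n'^{\downarrow}$. Since $n\leq n'$ we have $n\wedge(n'\wedge y)=n\wedge y\neq 0$ and likewise for $z$, while $n'$ being a $\m$-element of $L$ gives $(n'\wedge y)\wedge(n'\wedge z)=n'\wedge y\wedge z\neq 0$; then $n$ being a $\m$-element of $n'^{\downarrow}$ yields $n\wedge(n'\wedge y)\wedge(n'\wedge z)\neq 0$, that is $n\wedge y\wedge z\neq 0$ (using $n\leq n'$ once more). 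Hence $n$ is a $\m$-element of $L$.

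For \textbf{(2)}: note first $a_1\wedge a_2\leq b_1\wedge b_2$. Given $y,z\leq b_1\wedge b_2$ with $y\wedge z\neq 0$, $(a_1\wedge a_2)\wedge y\neq 0$ and $(a_1\wedge a_2)\wedge z\neq 0$, observe $y,z\in b_1^{\downarrow}$ and $a_1\wedge y\geq(a_1\wedge a_2)\wedge y\neq 0$, similarly for $z$; so $a_1\lm b_1^{\downarrow}$ gives $a_1\wedge y\wedge z\neq 0$. Now $a_1\wedge y$ and $a_1\wedge z$ lie in $b_2^{\downarrow}$ (they are $\leq y\leq b_2$ and $\leq z\leq b_2$ respectively), their meet equals $a_1\wedge y\wedge z\neq 0$, and $a_2\wedge(a_1\wedge y)=(a_1\wedge a_2)\wedge y\neq 0$, similarly for $z$; hence $a_2\lm b_2^{\downarrow}$ forces $(a_1\wedge a_2)\wedge y\wedge z=a_2\wedge(a_1\wedge y)\wedge(a_1\wedge z)\neq 0$. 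Part \textbf{(3)} is then the instance $a_1=a$, $b_1=1$ (so that $a\lm 1^{\downarrow}$ just means $a$ is a $\m$-element of $L$) and $a_2=b_2=b$ of \textbf{(2)}, using $b\lm b^{\downarrow}$ from Proposition~\ref{bpmi}(\ref{temu}); alternatively it admits a one-line direct check, since $(a\wedge b)\wedge y=a\wedge y$ whenever $y\leq b$.

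Part \textbf{(4)} is the one step that is more than bookkeeping. Since $L$ is a frame I would rewrite $(b\vee c)\wedge u=b\vee(c\wedge u)$ for any $u\geq b$; working inside $b^{\uparrow}$, whose bottom element is $b$, and using $b\leq y,z$, the hypotheses $y\wedge z\neq b$, $(b\vee c)\wedge y\neq b$, $(b\vee c)\wedge z\neq b$ then unfold into $y\wedge z\not\leq b$, $c\wedge y\not\leq b$, $c\wedge z\not\leq b$, and the goal $(b\vee c)\wedge y\wedge z\neq b$ into $c\wedge y\wedge z\not\leq b$. The decisive observation is the dictionary provided by the pseudo-complement: because $b$ is the greatest element with $a\wedge b=0$, for every $w\in L$ one has $w\not\leq b\iff a\wedge w\neq 0$. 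Through this equivalence the hypotheses become $a\wedge y\wedge z\neq 0$, $a\wedge c\wedge y\neq 0$, $a\wedge c\wedge z\neq 0$ and the goal becomes $a\wedge c\wedge y\wedge z\neq 0$, which now follows by applying the $\m$-element $c$ of $L$ to the pair $a\wedge y$, $a\wedge z$: their meet is $a\wedge y\wedge z\neq 0$, while $c\wedge(a\wedge y)=a\wedge c\wedge y\neq 0$ and $c\wedge(a\wedge z)=a\wedge c\wedge z\neq 0$, so $c\wedge(a\wedge y)\wedge(a\wedge z)=a\wedge c\wedge y\wedge z\neq 0$.

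I expect the only genuine obstacle to be this last part: spotting that the pseudo-complement converts ``$\not\leq b$'' into ``$a\wedge(-)\neq 0$'', which is exactly what translates a $\m$-element question inside $b^{\uparrow}$ into one inside $L$; the frame hypothesis is needed only here, to distribute $\wedge$ over $\vee$. Parts \textbf{(1)}--\textbf{(3)} are routine once one commits to the ``meet with a single auxiliary element'' substitution and reduces to $n=2$ via Proposition~\ref{net}.
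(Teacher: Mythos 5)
Your proof is correct, and on parts (1), (3) and (4) it follows essentially the same route as the paper: in (1) you pass to the auxiliary elements $n'\wedge y$, $n'\wedge z$ exactly as the paper does; in (4) your key step is the same pseudo-complement dictionary $w\not\leqslant b \iff a\wedge w\neq 0$ that the paper uses to translate the question from $b^\uparrow$ into $L$ (you then invoke the $\m$-property of $c$ on the pair $a\wedge y$, $a\wedge z$ instead of on the triple $a$, $y$, $z$, and your uniform argument makes the paper's preliminary reductions $a\neq 0$, $a\neq 1$, $c\not\leqslant b$ unnecessary, since in those degenerate cases the hypotheses are simply vacuous). The genuine difference is in (2). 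The paper introduces $n_i^1:=n_i\wedge b_1$ and $n_i^2:=n_i\wedge b_2$, applies the two hypotheses $a_1\lm b_1^\downarrow$ and $a_2\lm b_2^\downarrow$ in parallel to obtain $a_1\wedge n_1\wedge n_2\neq 0$ and $a_2\wedge n_1\wedge n_2\neq 0$, and then concludes $(a_1\wedge a_2)\wedge n_1\wedge n_2\neq 0$; that final step is not an immediate consequence of the two parallel statements and needs a further use of one of the hypotheses. Your sequential argument — first apply $a_1\lm b_1^\downarrow$ to $y$, $z$ to get $a_1\wedge y\wedge z\neq 0$, then apply $a_2\lm b_2^\downarrow$ to the elements $a_1\wedge y$ and $a_1\wedge z$ of $b_2^\downarrow$ — is precisely the chaining that justifies this last inequality, so your version of (2) is in fact tighter than the printed one. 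Deriving (3) as the instance $a_1=a$, $b_1=1$, $a_2=b_2=b$ of (2), using $b\lm b^\downarrow$ from Proposition \ref{bpmi}(\ref{temu}), is also valid and agrees in substance with the paper's direct verification.
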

	
\begin{proof}
(1) Let $ k_1$ and $ k_2$  be nonzero elements in \(L\) such that 
$
k_1\wedge k_2  \neq 0.
$
We assume that \( n \wedge k_i \neq 0 \) for each \( i  \). Then $n'\wedge k_i\neq 0$ for each $k$.
Since \( n' \) is a $\m$-element in $L$, we have \(n'\wedge \left(  k_1\wedge k_2 \right)  \neq 0\). This implies that $0\neq  n'\wedge k_i\in n^\downarrow$ for each $i$ with $\bigwedge_{i=1}^2 (n'\wedge k_i)\neq 0.$ Since \( n\lm n'^\downarrow \), moreover, we have,
\[
0\neq n \wedge \left(\bigwedge_{i=1}^2 (n'\wedge k_i)\right) =n\wedge \left( k_1\wedge k_2  \right),
\]
which shows that \( n\) is a $\m$-element in $L$.
		
(2) Suppose that \( a_1 \lm b_1^\downarrow \) and \( a_2 \lm b_2^\downarrow \). Let \(n_1\) and \(n_2\) be nonzero elements in \( (b_1 \wedge b_2)^\downarrow \) with \( n_1\wedge n_2\neq 0\). Let \( (a_1 \wedge a_2) \wedge n_i \neq 0 \) for each \( i  \).
Define \( n_i^1 := n_i \wedge b_1 \) and \( n_i^2 := n_i \wedge b_2 \), so that \(\bigwedge_{i=1}^2 n_i^1 \neq 0\) and \( \bigwedge_{i=1}^2 n_i^2 \neq 0 \). Notice that \( n_i = n_i^1 \wedge n_i^2 \). Thus,
\[
\bigwedge_{i=1}^2 n_i = \bigwedge_{i=1}^2 (n_i^1 \wedge n_i^2).
\]
Since \((a_1 \wedge a_2) \wedge n_i \neq 0\) for each \( i \), we get
\[
0\neq(a_1 \wedge a_2) \wedge n_i = (a_1 \wedge a_2) \wedge (n_i^1 \wedge n_i^2) =(a_1 \wedge n_i^1) \wedge (a_2 \wedge n_i^2).
\]
This implies that \( a_1 \wedge n_i^1 \neq 0 \) and \( a_2 \wedge n_i^2 \neq 0 \). Since \( a_1 \lm b_1^\downarrow \) and \( a_2 \lm b_2^\downarrow \), we have
\[
a_1 \wedge \left( \bigwedge_{i=1}^2 n_i^1 \right) \neq 0\quad \text{and}\quad a_2 \wedge \left( \bigwedge_{i=1}^2 n_i^2 \right) \neq 0.
\]
Thus,
\[
(a_1 \wedge a_2) \wedge \left( \bigwedge_{i=1}^2 n_i \right) = (a_1 \wedge a_2) \wedge \left( \bigwedge_{i=1}^2 (n_i^1 \wedge n_i^2) \right) \neq 0.
\]
Therefore, \( (a_1 \wedge a_2) \lm (b_1 \wedge b_2)^\downarrow \).
		
(3) Let $x_1$ and $x_2$ be nonzero elements in $b^\downarrow$ such that $x_1\wedge x_2\neq 0$ and $(a\wedge b)\wedge x_k\neq 0$ for each $k$. This implies $a\wedge (b\wedge x_k)\neq 0$ for each $k$. Also, observe that $\bigwedge_{k=1}^2(b\wedge x_k)=b\wedge (x_1\wedge x_2)=x_1\wedge x_2\neq 0.$ Since $a$ is a $\m$-element in $L$, this implies that \[(a\wedge b)\wedge \left(x_1\wedge x_2\right)=a\wedge \left(\bigwedge_{k=1}^2(b\wedge x_k)\right)\neq 0,\]
proving the desired claim.
		
(4) We can assume $a\neq 0$ and $a\neq 1$, because otherwise $b=1$ or $b=0$ respectively, and the thesis holds. We can also assume that $c\not\leqslant b$, because otherwise $b\vee c=b\lm b^\uparrow$. Let $x_1>b$ and $x_2 >b$  such that  $ (x_1\wedge x_2)>b$ and $((b\vee c)\wedge x_k)>b$, for each $k$. The latter condition is equivalent to 
$b\vee (c\wedge x_k)>b$, which is equivalent to $c\wedge x_k\not\leqslant b$. We need to show that
$$(b\vee c)\wedge x_1\wedge x_2>b,$$ which is equivalent to $c\wedge x_1\wedge x_2\not\leqslant b$. Since $b$ is a pseudo-complement of $a$ in $L$, $c\wedge x_1\wedge x_2\not\leqslant b$ holds if and only if
$$a\wedge c\wedge x_1\wedge x_2\neq 0.$$
Now, consider the elements $a,x_1,x_2 \in L$. $a\wedge x_1\wedge x_2\neq 0$ because otherwise we would have $x_1\wedge x_2\leq b$, as $b$ is a pseudo-complement of $a$. Moreover $c\wedge x_k\neq 0$ for each $k$, because otherwise we would have $c\wedge x_k\leq b$. Finally $c\wedge a\neq 0$, because otherwise $c\leq b$, contradicting our assumption. As $c$ is a $\m$-element in $L$, we conclude that $c\wedge a\wedge x_1\wedge x_2\neq 0$, which proves the claim.
\end{proof}
	
\begin{remark}
An analogue of part (2) of Proposition \ref{kp1} does not hold for upsets. Consider the frame of power set $\mathcal{P}(X)$ of the set $X=\{1,2,3,4,5\}$. We have $\{1,2\}\lm \{1\}^\uparrow$ and $\{3,4\}\lm \{3\}^\uparrow$, by Remark \ref{remrem}. However, it is easy to see that $\{1,2\}\vee \{3,4\}=\{1,2,3,4\}$ is not a $\m$-element in $\{1,3\}^\uparrow={\{1\}\vee\{3\}}^\uparrow$. Moreover, it is also not a $\m$-element in $\mathcal{P}(X)$.
\end{remark}
	
\begin{proposition}\label{pcmu}
Let $L$ be a frame, $a\in L$, and 
$b$ a pseudo-complement of $a$ in $L$. If $c\in L$ is maximal relative to the
properties $a\leqslant c$ and $b\wedge c=0$, then $a\lm c^\downarrow$.
\end{proposition}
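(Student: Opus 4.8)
The plan is to establish the stronger statement that $a$ is an \emph{essential} element of the downset $c^\downarrow$, and then read off $a\lm c^\downarrow$ from Proposition \ref{eim}. The first thing I would record is that $c^\downarrow$ is itself a frame, and in particular a quantale: arbitrary meets and joins of families in $c^\downarrow$ computed in $L$ again lie in $c^\downarrow$ (joins because $c$ bounds every such family; meets trivially), the top element being $c$, and declaring the multiplication on $c^\downarrow$ to be meet makes it a frame since $L$, being a frame, is distributive. This is precisely what lets us speak of essential elements and $\m$-elements inside $c^\downarrow$ and apply Proposition \ref{eim} there.

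The core of the argument is the claim that for every $y\in c^\downarrow$ with $y\neq 0$ one has $a\wedge y\neq 0$; note that $a\leq c$, so $a\in c^\downarrow$, and meets in $c^\downarrow$ agree with those in $L$. Suppose, for a contradiction, that $y\neq 0$, $y\leq c$, and $a\wedge y=0$. The idea is to absorb $y$ into $b$: using distributivity of the frame $L$ together with $a\wedge b=0$ (which holds because $b$ is a pseudo-complement of $a$) and $a\wedge y=0$, we obtain
\[
a\wedge(b\vee y)=(a\wedge b)\vee(a\wedge y)=0.
\]
Since $b$ is the \emph{greatest} element of $L$ meeting $a$ to $0$, this forces $b\vee y\leq b$, hence $y\leq b$; but then $y\leq b\wedge c=0$, contradicting $y\neq 0$. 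Therefore $a$ is essential in $c^\downarrow$, and Proposition \ref{eim} yields $a\lm c^\downarrow$.

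I do not anticipate a real obstacle here: the only points requiring care are verifying that $c^\downarrow$ is a quantale so that Proposition \ref{eim} is applicable, and keeping straight that ``essential'' and ``$\m$-element'' are being understood relative to $c^\downarrow$ rather than to $L$. It is perhaps worth remarking that the maximality of $c$ is not actually used in the argument above---only $a\leq c$ and $b\wedge c=0$ enter---though stating the hypothesis in this form keeps the statement parallel to the classical relationship between pseudo-complements and complements; a direct verification of the $n=2$ condition of Definition \ref{mid}, showing $a\wedge(y\wedge z)\neq 0$ whenever $y,z\leq c$ and $y\wedge z\neq 0$ by the same trick of absorbing $y\wedge z$ into $b$, would work equally well and would avoid passing to $c^\downarrow$ altogether.
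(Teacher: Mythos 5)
Your proposal is correct and follows essentially the same route as the paper: both proofs show that $a$ is in fact \emph{essential} in $c^\downarrow$ by using that $b$ is a pseudo-complement of $a$ together with $b\wedge c=0$ (the paper via a modular-law computation, you via distributivity, a negligible difference in a frame), and then conclude via Proposition \ref{eim}. Your observation that the maximality of $c$ is not needed for this implication also matches the paper, where maximality enters only through the Zorn's lemma remark on the existence of such a $c$.
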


\begin{proof}
Let us first note that every frame is upper continuous, and hence, by \cite[Corollary 4.1]{Cal00}, $L$ is pseudo-complemented. Now, the existence of an element $c$ with the required properties follows from Zorn's lemma. So, what remains is to show that $a$ is a $\m$-element in $c^\downarrow$.
To show $a\lm c^\downarrow$, it is sufficient to show  $a\wedge x= 0$ implies that $x=0$, for any $x\in c^\downarrow$. Observe that
$$a\wedge(b\vee x) = (a\wedge c)\wedge (b\vee x) =a\wedge((x\vee b) \wedge c) = a\wedge(x\vee (b\wedge c)) =a\wedge(x\vee 0)=a\wedge x=0.
$$
Since $b$ is a pseudo-complement element of $a$, we must have $b\vee x=b$. In other words, $b\wedge x=x$. Hence$x=b\wedge x\leqslant b\wedge c=0$, implying that $x=0$.
\end{proof}
	
\begin{remark}
Unlike what happens for essential elements (see \cite[Lemma 4.3]{Cal00}), in Proposition \ref{pcmu}, we cannot conclude that $c$ is maximal with respect to the property $a\lm c^\downarrow$, not even for a frame. Here is a counterexample. Consider the frame of power set $\mathcal{P}(X)$ of the set $X=\{1,2,3\}$, and take $a=\{1\}$ and $b=\{2,3\}$ the complement of $a$. The maximal $c$ such that $a\leq c$ and $b\wedge c=0$ is $c=a$ itself, and indeed $a\lm a^\downarrow$. But we also have $a\lm \{1,2\}^\downarrow$, because $a$ is an atom also in $\{1,2\}^\downarrow$.
What makes the maximality of $c$ fail in Proposition \ref{pcmu} is that it is not true that if $a\lm c^\downarrow$ and $a\wedge y=0$ then $c\wedge y=0$, unlike what happens for essential elements (see \cite[Exercise 4.14]{Cal00}).
Indeed, in the frame considered above, $\{1\}\lm \{1,2\}^\downarrow$ and $\{1\}\wedge \{2,3\}=\emptyset$ but $\{1,2\}\wedge \{2,3\}=\{2\}\neq \emptyset$.
\end{remark}

We finish this section with a result on socles, and  revisit them in Section \ref{chmu}.	

\begin{proposition}
Suppose $L$ is a compactly generated quantale. Then the following hold.
\begin{enumerate}
\item $\bigwedge \left\{\m\!\!\text{-elements of}\; \,L\right\}\leqslant \mathrm{Soc}(L)$.
			
\item If $L$ does not have any proper $\m$-elements, then $\mathrm{Soc}(L)=1$.
\end{enumerate}
\end{proposition}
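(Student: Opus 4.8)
The plan is to derive both statements from the facts that the bottom element is always a $\m$-element (Proposition \ref{bpmi}(1)), that every atom is a $\m$-element (Proposition \ref{bpmi}(\ref{atmu})), and that $L$ is compactly generated; since $0$ is a $\m$-element, the meet in (1) is understood over the \emph{nonzero} $\m$-elements of $L$, and ``proper $\m$-element'' in (2) as a $\m$-element $x$ with $0<x<1$.

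For (1): set $m:=\bigwedge\{\text{nonzero }\m\text{-elements of }L\}$. As $L$ is compactly generated, $m$ is the join of the compact elements below it, so it is enough to show $c\leqslant \mathrm{Soc}(L)$ for every compact $c\leqslant m$. Fix such a $c$; it lies below every atom of $L$, each atom being a nonzero $\m$-element. If $L$ has two distinct atoms $a\neq b$ then $c\leqslant a\wedge b=0\leqslant \mathrm{Soc}(L)$, the meet of distinct atoms being $0$. If $L$ has exactly one atom $a$ then $c\leqslant a=\mathrm{Soc}(L)$. If $L$ has no atoms then $\mathrm{Soc}(L)=0$ and I must show $c=0$; assuming $c\neq 0$, compactness of $c$ lets Zorn's lemma produce $d$ maximal with $d<c$ (chains of elements $<c$ have join $<c$ because $c$ is compact), and $d\neq 0$ since $c$ is not an atom; then $[d,c]=\{d,c\}$, and it remains to find a nonzero $\m$-element of $L$ lying below $d$, which yields the contradiction $c\leqslant m\leqslant d<c$. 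The natural candidate is $d$ itself, to be shown a $\m$-element of $L$ directly (checking, by Proposition \ref{net}, only the $n=2$ condition and using $[d,c]=\{d,c\}$) or via Proposition \ref{kp1}.

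For (2): if $L$ has no proper $\m$-element, then $1$ is the unique nonzero $\m$-element of $L$ (it is one by Proposition \ref{bpmi}(2)), so part (1) gives $1=\bigwedge\{1\}\leqslant \mathrm{Soc}(L)\leqslant 1$, hence $\mathrm{Soc}(L)=1$. (With ``proper'' read literally as ``$\neq 1$'', the hypothesis instead forces $0=1$ since $0$ is a $\m$-element, and again $\mathrm{Soc}(L)=\bigvee\emptyset=0=1$.)

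The one substantial point, and where compact generation is actually needed, is the no-atoms case of (1): that in a compactly generated quantale without atoms the nonzero $\m$-elements meet to $0$. I expect the delicate step to be verifying that the co-atom $d$ of a compact element (or some nonzero element below it) is a $\m$-element of the whole quantale — being a $\m$-element in a downset does not in general carry over to $L$ — whereas all the rest is routine unwinding of the definitions together with Proposition \ref{bpmi}.
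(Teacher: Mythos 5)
There is a genuine gap, and it sits exactly where you flag it: the no-atoms case of (1). Your argument there needs a nonzero $\m$-element of $L$ lying below the element $d$ produced by Zorn's lemma (or at least one not above $c$), and you offer no proof that such an element exists; the candidate ``$d$ itself is a $\m$-element of $L$'' is not justified, and there is no evident reason it should hold, since $d$ is only a co-atom of the interval $0^\downarrow\cap c^\downarrow$ and being a $\m$-element is a global condition on $L$ (as you yourself note, $\m$-ness in a downset does not transfer to $L$; cf.\ Remark \ref{remrem} for the upset analogue). Since part (2), under your reading of ``proper,'' is deduced from your strengthened part (1) precisely in the atomless situation, the gap propagates to (2) as well. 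So as written the proposal does not close.

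The missing idea is the one the paper actually uses, and it makes the whole construction with compact elements and Zorn's lemma unnecessary: by Proposition \ref{eim} every essential element is a $\m$-element (and essential elements are nonzero in a nontrivial quantale), so the set over which your meet is taken contains all essential elements, whence
\[
\bigwedge\left\{\m\text{-elements of } L\right\}\;\leqslant\;\bigwedge\left\{\text{essential elements of } L\right\}\;=\;\mathrm{Soc}(L),
\]
the last equality being \cite[Theorem 5.1]{Cal00} for compactly generated lattices --- this citation, not a Zorn argument, is where compact generation enters. (With the meet taken over \emph{all} $\m$-elements, as in the statement, the inequality is even trivial, since $0$ is a $\m$-element by Proposition \ref{bpmi}(1).) For (2) the paper argues differently again: no proper $\m$-elements gives, via Corollary \ref{emms}(\ref{css}), that $L$ is complemented, and then $\mathrm{Soc}(L)=1$ by \cite[Theorem 6.7]{Cal00}. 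Your alternative derivation of (2) from (1) would be acceptable once (1) is established for nonzero $\m$-elements, but to establish that you would in effect have to reprove the cited characterization of the socle, which your $d$-construction does not do.
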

	
\begin{proof}
(1) Since by Proposition \ref{eim}, every essential element is a $\m$-element and since by Proposition \ref{bpmi}(\ref{atmu}), every atom is a $\m$-element, we can conclude that the set of all $\m$-elements contains the set of all atoms in $L$. By \cite[Theorem 5.1]{Cal00}, we know that \[\bigwedge \left
\{\text{essential elements of}\; \,L\right\}= \mathrm{Soc}(L).\]
From these, our desired inequality follows immediately.
		
(2) Suppose $L$ does not have any proper $\m$-elements. Then by Corollary \ref{css}, $L$ is complemented. Now by \cite[Theorem 6.7]{Cal00}, a complemented $L$ implies  $\mathrm{Soc}(L)=1$, and thus, we have the claim. 
\end{proof}
	
\section{$\m$-complements and $\m$-closedness}
\label{mucc}
	
In this section, we generalize essential complements to 
$\m$-complements and essential closedness to 
$\m$-closedness. Once again, many results known for essential complements and essential closedness extend naturally to this context, while others fail.
	
\begin{definition}
Let $L$ be a quantale. A \emph{$\m$-complement} of an element $x$ in $L$ is an element $y$ in $L$ such that $x\wedge y=0$ and $x\vee y$ is a  $\m$-element in $L$. 
An element $x$ in $L$ is called \emph{$\m$-closed} if $x\lm b^\downarrow$ implies $x=b$.
\end{definition}

Our terminology, ``$\m$-complement'' is an adaptation from ``essential complements'' of modules.  In case of $a\lm b^\uparrow$, note that the ``zero'' element is nothing but $b$, as it is the bottom element in the subframe $b^\uparrow$.
Notice that any complement is a $\m$-complement, as the top element is a $\m$-element.
	
\begin{remark}
$\m$-complements are not unique. Indeed, in the quantale of ideals of the ring $\mathds{Z}_{12}$, the elements  $3 \mathds{Z}_{12}$ and $4 \mathds{Z}_{12}$ are complements of each other, as $3 \mathds{Z}_{12}\wedge 4 \mathds{Z}_{12}=0$ and $3 \mathds{Z}_{12}\vee 4 \mathds{Z}_{12}=\mathds{Z}_{12}$. Consider now $2 \mathds{Z}_{12}$ and the intersections $3 \mathds{Z}_{12}\wedge 2 \mathds{Z}_{12}=6\mathds{Z}_{12}$ and $4 \mathds{Z}_{12}\wedge 2 \mathds{Z}_{12}=4 \mathds{Z}_{12}$. It is easy to see that $2 \mathds{Z}_{12}$ is a $\m$-element. Moreover, $6 \mathds{Z}_{12}\wedge 4 \mathds{Z}_{12} = 0$ and $6 \mathds{Z}_{12}\vee 4\mathds{Z}_{12}=2 \mathds{Z}_{12}$. So, both $3 \mathds{Z}_{12}$ and $6 \mathds{Z}_{12}$ are $\m$-complements of $4 \mathds{Z}_{12}$.
\end{remark}
	
\begin{proposition}
Suppose $L$ is a distributive quantale. If $a$, $n\in L$ are such that  $a\leqslant n$ and $b$ is a $\m$-complement of $a$ in $L$, then $b \wedge n$ is a $\m$-complement of $a$ in $n^\downarrow$.
\end{proposition}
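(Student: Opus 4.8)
The plan is to reduce the statement to a direct application of Proposition \ref{kp1}(\ref{pp1}) together with distributivity. Recall that $b$ being a $\m$-complement of $a$ in $L$ means $a\wedge b=0$ and $a\vee b$ is a $\m$-element in $L$. To say that $b\wedge n$ is a $\m$-complement of $a$ in $n^\downarrow$ means two things: that $a\wedge(b\wedge n)=0$ computed inside $n^\downarrow$, and that the join $a\vee(b\wedge n)$ computed inside $n^\downarrow$ is a $\m$-element of the quantale $n^\downarrow$. Since both $a$ and $b\wedge n$ lie below $n$, their meet and join inside $n^\downarrow$ coincide with those in $L$, so it suffices to work with the operations of $L$ throughout; I would spend one sentence noting that $n^\downarrow$ is itself an (integral commutative) quantale and that distributivity of $L$ is inherited by $n^\downarrow$.

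First I would dispose of the orthogonality condition: since $a\wedge b=0$ we get $a\wedge(b\wedge n)=(a\wedge b)\wedge n=0$, and this does not even use distributivity. Next I would compute the relevant join. Using distributivity of $L$ and the hypothesis $a\leqslant n$,
\[
a\vee(b\wedge n)=(a\vee b)\wedge(a\vee n)=(a\vee b)\wedge n .
\]
So the whole matter comes down to showing that $(a\vee b)\wedge n$ is a $\m$-element of $n^\downarrow$.

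But this is exactly Proposition \ref{kp1}(\ref{pp1}): since $a\vee b$ is a $\m$-element in $L$ by hypothesis, applying that result with the $\m$-element $a\vee b$ and the element $n\in L$ yields $\bigl((a\vee b)\wedge n\bigr)\lm n^\downarrow$, i.e. $(a\vee b)\wedge n$ is a $\m$-element in $n^\downarrow$. Combining this with $a\wedge(b\wedge n)=0$ and the identity $a\vee(b\wedge n)=(a\vee b)\wedge n$ from the previous paragraph, we conclude that $b\wedge n$ is a $\m$-complement of $a$ in $n^\downarrow$.

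There is no real obstacle here; the only point that needs a word of care is the bookkeeping about which lattice the meets and joins are taken in — checking that the operations of $n^\downarrow$ agree with the restricted operations of $L$ on the elements involved, and that distributivity descends to $n^\downarrow$. Once that is in place, distributivity rewrites the join into a meet with $n$, and Proposition \ref{kp1}(\ref{pp1}) does all the substantive work.
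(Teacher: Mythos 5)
Your proof is correct, and it is organized differently from the paper's. The paper verifies the $\m$-element condition for $a\vee(b\wedge n)$ in $n^\downarrow$ directly: it takes test elements $k_1,k_2\leqslant n$ and uses distributivity at that level to show $(a\vee(b\wedge n))\wedge k_i=(a\vee b)\wedge k_i$, then invokes the hypothesis that $a\vee b$ is a $\m$-element of $L$. You instead use distributivity once, at the level of the elements themselves, to get $a\vee(b\wedge n)=(a\vee b)\wedge(a\vee n)=(a\vee b)\wedge n$, and then delegate the entire $\m$-verification to Proposition \ref{kp1}(\ref{pp1}), which says exactly that the meet of a $\m$-element of $L$ with any $n$ is a $\m$-element of $n^\downarrow$ (and which was proved with no lattice-theoretic hypothesis at all). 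Your bookkeeping remarks are fine: meets and joins of elements below $n$ computed in $n^\downarrow$ agree with those in $L$, and the bottom of $n^\downarrow$ is $0$, so both conditions in the definition of $\m$-complement are correctly checked. What your route buys is economy and a sharper view of where the hypothesis enters: the only use of distributivity is the identity $a\vee(b\wedge n)=(a\vee b)\wedge n$ under $a\leqslant n$, which is precisely an instance of the modular law; so your argument in fact proves the statement for modular quantales, a slightly more general result than the one stated (the paper's in-line computation can also be recast this way, but it essentially re-proves Proposition \ref{kp1}(\ref{pp1}) on the spot rather than citing it).
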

	
\begin{proof}
Since $a \wedge b = 0$ and $b \wedge n \leqslant b$, it follows that
\(a \wedge (b \wedge n) \leqslant a \wedge b = 0.
\)
To show that $a \vee (b \wedge n)\lm n^\downarrow$,
let $k_1$ and $k_2$ be  nonzero elements in $n^\downarrow$ with
\(
\ k_1\wedge k_2 \neq 0\) and \((a \vee (b \wedge n)) \wedge k_i \neq 0 \) for each $i$.
Since $a \vee (b \wedge n) \leqslant a \vee b$ and $k_i \leqslant n \in L$, we have
\(
0\neq (a \vee (b \wedge n)) \wedge k_i = (a \vee b) \wedge k_i\) for each $i$, by applying distributivity. 
Thus, we can rewrite
\[
(a \vee (b \wedge n)) \wedge \left( k_1\wedge k_2  \right) = (a \vee b) \wedge \left( k_1\wedge k_2  \right).
\]
Since  
\(
k_1\wedge k_2  \neq 0\) and \((a \vee b) \wedge k_i \neq 0
\) for each $i$, and $a \vee b$ is a $\m$-element, we obtain
\(
(a \vee b) \wedge \left( k_1\wedge k_2  \right) \neq 0.\)
Therefore,
\[
(a \vee (b \wedge n)) \wedge \left( k_1\wedge k_2  \right) \neq 0,
\]
proving the claim.
\end{proof}

It can be shown  that $a\vee a^\perp$ is an essential element in $L$. The above result in fact extends this to $\m$-elements and shows more.
		
\begin{theorem}\label{mcm}
If \(L\) is a modular quantale and  \(x\), \(y\in L\)  such that \(x \wedge y= 0\), then \(y\) has a \( \m \)-complement containing \(x\).
\end{theorem}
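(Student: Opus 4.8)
The plan is to adapt the classical construction of essential complements in modular lattices, and then upgrade ``essential'' to ``$\m$-element'' for free by Proposition~\ref{eim}. Indeed, a $\m$-complement of $y$ containing $x$ is precisely an element $z$ with $x\leqslant z$, $y\wedge z=0$, and $y\vee z$ a $\m$-element; since essential elements are $\m$-elements, it suffices to produce $z\geqslant x$ with $y\wedge z=0$ and $y\vee z$ \emph{essential}. Note that the multiplication of the quantale plays no role here: the whole argument lives in the underlying modular lattice.

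First I would apply Zorn's lemma to the set $S=\{z\in L\mid x\leqslant z\ \text{and}\ y\wedge z=0\}$, which is nonempty because $x\in S$ by hypothesis. For a chain in $S$ one takes its join in $L$ as a candidate upper bound and checks that meeting it with $y$ still gives $0$; this produces a maximal element $z$ of $S$. This is the one step where I expect to have to be careful, since in an arbitrary complete lattice the behaviour of joins of chains against the meet with $y$ is not automatic, and the quantale multiplication only \emph{almost} rescues it (one gets nilpotence of the offending element, not that it vanishes).

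The heart of the proof is then to show that $y\vee z$ is essential. Suppose not: there is $t\in L$, $t\neq 0$, with $(y\vee z)\wedge t=0$. Since $z\leqslant y\vee z$, the modular law yields
\[
(z\vee t)\wedge(y\vee z)=z\vee\bigl(t\wedge(y\vee z)\bigr)=z\vee 0=z,
\]
so that $y\wedge(z\vee t)\leqslant(y\vee z)\wedge(z\vee t)=z$, hence $y\wedge(z\vee t)\leqslant y\wedge z=0$. Moreover $z\vee t>z$: if $t\leqslant z$, then $t\leqslant y\vee z$ and $t=t\wedge(y\vee z)=0$, a contradiction. Thus $z\vee t\in S$ with $z\vee t>z$, contradicting the maximality of $z$. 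Therefore $y\vee z$ is essential, and by Proposition~\ref{eim} it is a $\m$-element; together with $x\leqslant z$ and $y\wedge z=0$, this exhibits $z$ as a $\m$-complement of $y$ containing $x$.

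The main obstacle is the modular-law manipulation in the third paragraph — specifically, spotting that $z\leqslant y\vee z$ is exactly the instance of the modular law that collapses $(z\vee t)\wedge(y\vee z)$ to $z$, which is what forces $z\vee t$ back into $S$. Everything else is routine, modulo the mild care needed in the Zorn step. (It is worth remarking that the construction actually delivers an \emph{essential} complement of $y$ containing $x$; the statement is phrased in terms of $\m$-complements to match the theme of this section.)
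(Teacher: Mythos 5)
Your proof is correct and takes essentially the same route as the paper: apply Zorn's lemma to $\{z\in L\mid x\leqslant z,\ y\wedge z=0\}$ and then use modularity to show that the join of $y$ with a maximal such $z$ is essential, hence a $\m$-element --- the paper phrases the contradiction directly in terms of $\m$-elements and outsources the modular step to \cite[Lemma 6.3]{Cal00}, but that lemma is exactly your explicit modular-law computation, and since the extra hypotheses on the two witnesses are never used, the paper's argument also really produces an essential complement, as you remark. The chain-condition subtlety you flag in the Zorn step is passed over silently in the paper as well, so it does not distinguish the two arguments.
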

	
\begin{proof}
Consider the set \[\mathcal{A}:= \left
\{ y'\in L\mid y \wedge y' = 0,\; x \leqslant y'\right\}.\]
The set \(\mathcal{A}\) is nonempty because \(x \in \mathcal{A}\). By Zorn's Lemma, \(\mathcal{A}\) has a maximal element, say \(y'\). 
We claim that \(y \vee y'\) is a $\m$-element in $L$. If not, then there exist  nonzero elements \(x_1\) and \(x_2\) in \(L\) such that \(x_1\wedge x_2 \neq 0\) and \((y\vee y') \wedge x_k \neq 0\), for each \(k\), but \[(y \vee y') \wedge \left(x_1\wedge x_2 \right) = 0.\] Then by \cite[Lemma 6.3]{Cal00}, we have $y\wedge \left(y'\vee (x_1\wedge x_2 )\right) = 0,$ contradicting the maximality of \(y'\). 
\end{proof}
	
\begin{corollary}\label{emms}
In a modular quantale $L$, the following hold.
\begin{enumerate}
\item Every element 
in $L$ has a
\( \m \)-complement.
	
\item\label{css} If $L$ has no proper $\m$-elements, then $L$ is complemented.
	
\item Every pseudo-complement in $L$  is a $\m$-complement.
\end{enumerate}
\end{corollary}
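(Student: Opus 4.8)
The plan is to obtain all three parts as immediate consequences of Theorem \ref{mcm}, which carries the real content; the corollary is essentially a matter of choosing the right inputs to that theorem.

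For part (1), I would apply Theorem \ref{mcm} with $x=0$. For any $y\in L$ we have $0\wedge y=0$, so the theorem produces a $\m$-complement of $y$ containing $0$; since every element of $L$ lies above $0$, this is simply a $\m$-complement of $y$. Hence every element of $L$ has a $\m$-complement. For part (2), take $x\in L$ arbitrary and choose, via part (1), a $\m$-complement $y$ of $x$, so that $x\wedge y=0$ and $x\vee y$ is a $\m$-element of $L$. Since $L$ has no proper $\m$-elements, $x\vee y=1$; together with $x\wedge y=0$ this exhibits $y$ as a complement of $x$, and as $x$ was arbitrary, $L$ is complemented.

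For part (3), let $b$ be a pseudo-complement in $L$, that is, the greatest element satisfying $a\wedge b=0$ for some $a\in L$. Applying Theorem \ref{mcm} to the pair $(x,y)=(b,a)$ — which is legitimate because $b\wedge a=0$ — yields a $\m$-complement $y'$ of $a$ with $b\leqslant y'$; in particular $a\wedge y'=0$. By the maximality built into the pseudo-complement $b$, we get $y'\leqslant b$, hence $y'=b$. Therefore $a\wedge b=0$ and $a\vee b=a\vee y'$ is a $\m$-element of $L$, so $b$ is a $\m$-complement (namely, of $a$).

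None of these steps is hard once Theorem \ref{mcm} is in hand. The only point that takes a moment's thought is part (3): one must feed the pseudo-complement into the correct argument slot of the theorem (as the element being complemented, not as the element to be contained) and then invoke its greatest-element property to collapse the resulting $\m$-complement back onto $b$ itself.
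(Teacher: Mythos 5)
Your proposal is correct and follows essentially the same route as the paper: part (1) by taking $x=0$ in Theorem \ref{mcm}, part (2) by noting the join in any $\m$-complement must equal $1$ when there are no proper $\m$-elements, and part (3) by feeding the pseudo-complement into Theorem \ref{mcm} and using its maximality to force the resulting $\m$-complement to coincide with it. No gaps; the variable naming is the only difference.
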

	
\begin{proof}
(1)		Take $x=0$ in Theorem \ref{mcm}.

(2) If a quantale $L$ does not have any proper $\m$-elements, then by (1), for every  $x\in L$, there exists an  $y\in L$ such that $1=x\vee y$, and hence, $L$ is complemented.

(3) Let $x$ be a pseudo-complement of $y$ in $L$. Then $x\wedge y=0$ and by Theorem \ref{mcm} $y$ has a $\m$-complement $x'$ containing $x$. In particular we have $x'\wedge y=0$ with $x'\geqslant x$, and by maximality of $x$ we conclude that $x'=x$. So $x$ is a $\m$-complement of $y$.
\end{proof}

Since every essential element is a $\m$-element, obviously, every $\m$-closed element in $L$ is essentially closed in $L$. 		
	
\begin{proposition}\label{proppseudocompl}
Let $L$ be a modular quantale and $a\in L$. If $a$ is $\m$-closed in $L$, then $a$ is a pseudo-complement of an element in $L$.
\end{proposition}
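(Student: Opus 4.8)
The plan is to reduce at once to the weaker hypothesis that $a$ is \emph{essentially} closed — this is precisely the observation recorded immediately before the statement, since an essential element of any $b^\downarrow$ is in particular a $\m$-element of $b^\downarrow$ — and then to exhibit, by a Zorn's-lemma argument in the style of the proof of Theorem~\ref{mcm}, an element of $L$ for which $a$ is a pseudo-complement.

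Concretely, I would first use Zorn's Lemma (exactly as in the proof of Theorem~\ref{mcm}) to pick $b\in L$ maximal with respect to $a\wedge b=0$, and then show that $a$ is a pseudo-complement of $b$. Suppose not, so that $a<a'$ for some $a'$ with $a'\wedge b=0$. Since $a$ is essentially closed and $a<a'$, it is not essential in $(a')^\downarrow$, so there is a nonzero $x\leqslant a'$ with $a\wedge x=0$. As $x\leqslant a'$, modularity yields $(b\vee x)\wedge a'=x\vee(b\wedge a')=x$, and therefore
\[
(b\vee x)\wedge a=(b\vee x)\wedge a'\wedge a=x\wedge a=0 .
\]
Moreover $x\not\leqslant b$, for otherwise $x\leqslant a'\wedge b=0$; hence $b<b\vee x$ while $(b\vee x)\wedge a=0$, contradicting the maximality of $b$. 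So no such $a'$ exists, and $a$ is a pseudo-complement of $b$.

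I expect the crux to be the choice of $b$ together with the modular-law computation just displayed: one cannot simply take $\bigvee\{z\mid a\wedge z=0\}$, since in a modular quantale this need not be disjoint from $a$, so a maximality argument is forced, and it is precisely modularity (via $x\leqslant a'$) that allows one to enlarge $b$ to $b\vee x$ without destroying disjointness from $a$, which is what produces the contradiction. A point I would handle with care is the passage from ``$b$ maximal with $a\wedge b=0$'' to the pseudo-complement being the \emph{greatest} such element; when pseudo-complements are available (for instance in a frame) this can be organized more symmetrically by taking $b$ to be a pseudo-complement of $a$ and observing that the pseudo-complement of $b$ contains $a$, is an essential extension of $a$ — any element of it disjoint from $a$ lies in $b$ and hence is $0$ — and so equals $a$ by essential closedness, exhibiting $a$ as the pseudo-complement of $b$.
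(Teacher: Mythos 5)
Your proposal is correct, and it differs from the paper's proof only in its second half. The paper makes the same initial reduction (a $\m$-closed element is essentially closed, since essential elements are $\m$-elements in every downset) and then simply cites \cite[Corollary 4.3]{Cal00}; you instead inline a proof of the cited fact: choose $b$ maximal with $a\wedge b=0$ by Zorn, and use the modular law $(b\vee x)\wedge a'=x\vee(b\wedge a')$ to show that any $a'>a$ disjoint from $b$ would allow $b$ to be enlarged to $b\vee x$, contradicting maximality. This is the standard argument behind the citation, so conceptually the routes coincide; what yours buys is self-containedness, at the price of repeating the Zorn step, which --- exactly as in the paper's Theorem \ref{mcm} --- tacitly needs chains in $\{z\in L\mid a\wedge z=0\}$ to have upper bounds (a form of upper continuity, which is also the standing hypothesis in \cite{Cal00}).

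The maximal-versus-greatest point you raise at the end is genuine, but it should be settled in favour of your main argument, not of the frame-style rearrangement. What you prove is that $a$ is \emph{maximal} among elements disjoint from $b$; that is precisely the notion of pseudo-complement used in \cite{Cal00}, hence exactly what the paper's own citation delivers, so your proof establishes the same statement as the paper's. Under the literal definition in Section \ref{bck} (the \emph{greatest} element disjoint from a given one) the proposition actually fails in modular quantales, so no argument could upgrade your maximality to greatestness. For instance, in the quantale of ideals of $K[x,y,z]/(x,y,z)^2$ ($K$ a field), whose underlying lattice is the subspace lattice of a three-dimensional space $V$ with one extra top adjoined, a plane $W$ is $\m$-closed: for a line $\ell\nleqslant W$ and two distinct planes $P_1\neq P_2$ containing $\ell$ one has $P_1\wedge P_2=\ell\neq 0$, $W\wedge P_i\neq 0$, but $W\wedge P_1\wedge P_2=0$, so $W$ is not a $\m$-element in either strictly larger downset. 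Yet the only elements disjoint from $W$ are $0$ and such lines $\ell$; the greatest element disjoint from $0$ is the top, and the elements disjoint from $\ell$ include every plane avoiding $\ell$ and so have no greatest member. Thus $W$ is maximal disjoint from $\ell$ but is the greatest element disjoint from nothing. Your symmetric argument via the pseudo-complement of $b$ is fine in a frame, where greatest disjoint elements exist, but it cannot be extended to this setting; read the statement with the meaning of pseudo-complement from \cite{Cal00}, as the paper's proof implicitly does, and your argument is complete.
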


\begin{proof}
If $a$ is $\m$-closed in $L$, then $a$ is essentially closed in $L$. By \cite[Corollary 4.3]{Cal00}, we conclude that $a$ is a pseudo-complement.
\end{proof}
	
\begin{remark}\label{remexmuclosed}
Unlike what happens for essential elements (see \cite[Corollary 4.3]{Cal00}), the converse of Proposition \ref{proppseudocompl} does not hold, not even for a frame. Indeed, consider the frame of power set $\mathcal{P}(X)$ of the set $X=\{1,2,3\}$. We have that $\{1\}$ is a pseudo-complement of an element (actually, a complement), but $\{1\}$ is not $\m$-closed as $\{1\}\lm\{1,2\}^\downarrow$, since it is an atom there.
\end{remark}
	
\begin{proposition}\label{theormuclosed}
Suppose $L$ is a frame and $a$ and $b$ are nonzero elements in $L$.  If $a$ is $\m$-closed in $L$, then whenever $a\leqslant b$ and $b$ is a $\m$-element in $L$, we have that $b\lm a^\uparrow$.
\end{proposition}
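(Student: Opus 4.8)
The plan is to argue by contradiction, using only the case $n=2$ of the $\m$-element condition (legitimate by Proposition~\ref{net}) and feeding the resulting data into Proposition~\ref{kp1}(\ref{pp1}). First I would record that, since $L$ is a frame, $a^\uparrow$ is again a frame with bottom element $a$, so ``nonzero in $a^\uparrow$'' means ``strictly above $a$'' and ``zero in $a^\uparrow$'' means ``equal to $a$'', while finite meets and joins of elements of $a^\uparrow$ agree with those computed in $L$. As $a\leqslant b$ is assumed, proving $b\lm a^\uparrow$ reduces to showing that $b$ is a $\m$-element of $a^\uparrow$.

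So suppose $b$ is not a $\m$-element of $a^\uparrow$. Then there exist $y,z\in a^\uparrow$ with $y\wedge z\neq a$, $b\wedge y\neq a$, $b\wedge z\neq a$, but $b\wedge y\wedge z=a$. In particular $a\leqslant y\wedge z$ and $a\neq y\wedge z$, i.e.\ $a<y\wedge z$. Now comes the decisive step: $b$ is a $\m$-element of $L$, so Proposition~\ref{kp1}(\ref{pp1}), applied with $b$ as the $\m$-element and $y\wedge z$ as the arbitrary element, gives $\bigl(b\wedge(y\wedge z)\bigr)\lm (y\wedge z)^\downarrow$. Since $b\wedge(y\wedge z)=a$, this says precisely that $a\lm (y\wedge z)^\downarrow$; together with $a<y\wedge z$ this exhibits $a$ as a \emph{proper} $\m$-element in $(y\wedge z)^\downarrow$, contradicting the hypothesis that $a$ is $\m$-closed in $L$. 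Hence $b$ is a $\m$-element of $a^\uparrow$, that is, $b\lm a^\uparrow$.

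I do not anticipate a genuine obstacle here: the only care needed is in identifying the zero of the subframe $a^\uparrow$ with $a$, and in retaining the \emph{strict} inequality $a<y\wedge z$, which is exactly what makes the $\m$-element $a$ of $(y\wedge z)^\downarrow$ run afoul of $\m$-closedness. (Incidentally, the argument never uses the hypotheses $b\wedge y\neq a$ and $b\wedge z\neq a$, so one in fact gets that $b$ is \emph{essential} in $a^\uparrow$; but $b\lm a^\uparrow$ is all that is asserted.)
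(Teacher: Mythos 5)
Your proof is correct and is essentially the paper's own argument: both apply Proposition~\ref{kp1}(\ref{pp1}) to the $\m$-element $b$ and the element $y\wedge z$ to obtain $a\lm (y\wedge z)^\downarrow$ when $b\wedge y\wedge z=a$, and then invoke the $\m$-closedness of $a$ to force $a=y\wedge z$, contradicting $a<y\wedge z$. The only cosmetic difference is your contradiction framing, and your side remark that the hypotheses $b\wedge y\neq a$ and $b\wedge z\neq a$ are never used (so $b$ is in fact essential in $a^\uparrow$) applies equally to the paper's proof.
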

	
\begin{proof}
Suppose that $a$ is $\m$-closed in $L$. Assume that $a\leqslant b$ and that $b$ is a $\m$-element in $L$; we need to show that $b\lm a^\uparrow$. Let $a<x_1$ and  $a<x_2$ such that $a<(x_1\wedge x_2)$ and $a<b\wedge x_k$ for each $k$. It is clear that $b\wedge \left( x_1\wedge x_2 \right)\geqslant a$. Therefore, it is sufficient to show that if $b\wedge \left( x_1\wedge x_2 \right)= a$, then $a=x_1\wedge x_2$. By Proposition  \ref{kp1}(\ref{pp1}), we have \[b\wedge \left( x_1\wedge x_2 \right) \lm \left( x_1\wedge x_2 \right)^\downarrow.\] Hence $a\lm \left( x_1\wedge x_2 \right)^\downarrow$. Since $a$ is $\m$-closed in $L$, we must have $a=x_1\wedge x_2$.
\end{proof}
	
\begin{remark}
Unlike what happens for essential elements (see \cite[Exercise 4.12]{Cal00}, the converse of Proposition \ref{theormuclosed} does not hold. Here is a counterexample. Consider the frame of power set $\mathcal{P}(X)$ of the set $X=\{1,2,3\}$, and take $a=\{1\}$. The $\m$-elements of $\mathcal{P}(X)$ are precisely $\emptyset,\{1\},\{2\},\{3\},X$. So, whenever $\{1\}\leq b$ and $b$ is a $\m$-element in $\mathcal{P}(X)$, we have $b\lm \{1\}^\uparrow$. Indeed, the only possible elements for $b$ are $\{1\}$ and $X$, and $X$ is a $\m$-element in $\{1\}^\uparrow$ because it is its top element. However $a=\{1\}$ is not $\m$-closed, by Remark \ref{remexmuclosed}.
\end{remark}
	
\section{Characterizations of $\m$-elements}
\label{chmu}	

In this section, we first establish a complete characterization of 
$\m$-elements in certain distinguished classes of quantales, including the quantales of ideals in a quotient ring of a principal ideal domain (PID) and the frames of open subsets of a topological space. Building on these results, we then provide a complete characterization of 
$\m$-elements in every modular quantale.
	
We begin by proving a characterization of $\m$-elements in the quantale of ideals in  $\mathds{Z}_n$.
	
\begin{theorem}\label{charzn}
Let $n\in\mathds{N}$, $n\geqslant 2$, and consider its unique factorization $n=p_1^{m_1}\cdot \ldots \cdot p_k^{m_k}$. In the quantale of ideals of the ring $\mathds{Z}_n$, an element 
$$x:=(l)=(\operatorname{gcd}(l,n))=\left(p_1^{m'_1}\cdot \ldots \cdot p_k^{m'_k}\right)$$
is a $\m$-element if and only if there are not $i,j,s \in \{1,\ldots, k\}$ such that $i\neq j$, $m'_i < m_i$, $m'_j < m_j$ and $m'_s=m_s$. 
In particular, $x$ is essential if and only if for every $i$ we have $m'_i < m_i$.
\end{theorem}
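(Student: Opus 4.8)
The plan is to convert the statement into a purely combinatorial fact about a ``support set.'' By the Chinese Remainder Theorem, $\mathds{Z}_n\cong\prod_{i=1}^{k}\mathds{Z}_{p_i^{m_i}}$, and since the ideal lattice of a finite product of (unital, commutative) rings is the product of the ideal lattices, the quantale of ideals of $\mathds{Z}_n$ is the product of the chains $C_i$ of ideals of $\mathds{Z}_{p_i^{m_i}}$, where $C_i$ has $m_i+1$ elements. Under this identification the ideal $x=(p_1^{m'_1}\cdots p_k^{m'_k})$ becomes the tuple $(m'_1,\dots,m'_k)$, meets are computed coordinatewise, and the zero ideal is $(m_1,\dots,m_k)$. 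To each ideal $x$ I would attach its \emph{support}
\[
S(x):=\{\,i\in\{1,\dots,k\}\mid m'_i<m_i\,\}.
\]
The first step is to record three elementary observations: $x=0$ if and only if $S(x)=\emptyset$; $S(x\wedge y)=S(x)\cap S(y)$ for all ideals $x,y$ (in each coordinate the meet of two elements of the chain $C_i$ equals its bottom precisely when one of the two already does); and every subset $T\subseteq\{1,\dots,k\}$ is realized as $T=S(y)$ for the ideal $y=(\prod_{t\notin T}p_t^{m_t})$. Combining these facts with Proposition \ref{net}, the ideal $x$ is a $\m$-element if and only if for all subsets $T_1,T_2\subseteq\{1,\dots,k\}$ with $T_1\cap T_2\neq\emptyset$, the conditions $S(x)\cap T_1\neq\emptyset$ and $S(x)\cap T_2\neq\emptyset$ force $S(x)\cap T_1\cap T_2\neq\emptyset$.

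Next I would observe that the hypothesis of the theorem is exactly the assertion that $|S(x)|\leqslant 1$ or $S(x)=\{1,\dots,k\}$: its negation reads ``there exist $i\neq j$ in $S(x)$ and some $s\notin S(x)$,'' which is precisely $|S(x)|\geqslant 2$ together with $S(x)\neq\{1,\dots,k\}$. So it remains to prove that $x$ is a $\m$-element if and only if $|S(x)|\leqslant 1$ or $S(x)=\{1,\dots,k\}$.

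For the forward direction I would split into cases. If $S(x)=\emptyset$ then $x=0$, a $\m$-element by Proposition \ref{bpmi}(1). If $S(x)=\{1,\dots,k\}$ then $S(x)\cap S(y)\neq\emptyset$ for every nonzero $y$, so $x$ is essential and hence a $\m$-element by Proposition \ref{eim}. If $S(x)=\{s_0\}$, then any $T_1,T_2$ that meet $S(x)$ must both contain $s_0$, so $s_0\in S(x)\cap T_1\cap T_2$. For the converse I argue contrapositively: assuming $i\neq j$ in $S(x)$ and $s\notin S(x)$, choose ideals $y,z$ with $S(y)=\{i,s\}$ and $S(z)=\{j,s\}$; then $S(y)\cap S(z)=\{s\}\neq\emptyset$, while $S(x)\cap S(y)=\{i\}$ and $S(x)\cap S(z)=\{j\}$ are nonempty but $S(x)\cap S(y)\cap S(z)=\{i\}\cap\{j,s\}=\emptyset$ (because $i\neq j$ and $i\in S(x)$ whereas $s\notin S(x)$), so $x\wedge y\wedge z=0$ and $x$ fails to be a $\m$-element. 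Finally the ``in particular'' clause drops out of the $S(x)=\{1,\dots,k\}$ case and its converse: $x$ is essential iff $S(x)$ meets every nonempty subset, which by testing against singletons holds iff $S(x)=\{1,\dots,k\}$, i.e.\ $m'_i<m_i$ for every $i$.

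The one point that needs real care — and the closest thing to an obstacle here — is setting up the translation dictionary correctly: verifying that ideals of $\mathds{Z}_n$ correspond to exponent tuples with coordinatewise meet, that the support of a meet is the intersection of supports, and that all supports are realized. Once that dictionary is in place the $\m$-element condition collapses to the elementary set-theoretic statement above, and the remaining case analysis is routine.
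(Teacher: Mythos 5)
Your proposal is correct, and at its core it is the same argument as the paper's: the paper also works with the exponent vectors (meets of ideals computed via maxima of exponents), proves essentiality exactly when every $m'_i<m_i$, handles the case of a single deficient exponent directly, and refutes $\m$-ness with the very same pair of witness ideals (your $S(y)=\{i,s\}$, $S(z)=\{j,s\}$ are the paper's two explicitly written generators). The only real difference is presentational: you route everything through the CRT product-of-chains picture and the support-set dictionary $S(x\wedge y)=S(x)\cap S(y)$, which collapses the theorem to a two-line combinatorial statement about subsets of $\{1,\dots,k\}$; this buys a uniform treatment of the positive cases (in particular the paper's separate ``$x$ is an atom'' subcase disappears, since $|S(x)|=1$ is handled in one stroke) and makes the counterexample construction transparent, at the modest cost of first justifying the dictionary (ideal lattice of a product is the product of the ideal lattices, all supports are realized), which you do. Both proofs correctly reduce to $n=2$ via Proposition \ref{net}, and your derivation of the ``in particular'' clause by testing against singleton supports matches the paper's test ideals $\left(p_1^{m_1}\cdots p_i^{m_i-1}\cdots p_k^{m_k}\right)$.
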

	
\begin{proof}
We first prove that $x$ is essential if and only if for every $i$ we have $m'_i < m_i$. If $m'_i < m_i$, given an ideal $y\neq (0)$ with $y=(p_1^{m''_1}\cdot \ldots \cdot p_k^{m''_k})$, we obtain 
$$x \wedge y= \left(p_1^{\operatorname{max}\{m'_1,m''_1\}}\cdot \ldots \cdot p_k^{\operatorname{max}\{m'_k,m''_k\}}\right).$$
Since $y\neq (0)$, there exists $j\in \{1,\ldots, k\}$ such that $m''_j < m_j$ and so $x\wedge y\neq (0)$ since $p_j$ appears with exponent $\operatorname{max}\{m'_j,m''_j\}< m_j$ in the unique factorization of the generator of $x \wedge y$. Conversely, if $x$ is essential, then for every $i\in\{1,\ldots, k\}$ we must have $$x \wedge \left(p_1^{m_1}\cdot \ldots \cdot p_i^{m_i-1} \cdot \ldots \cdot p_k^{m_k}\right) \neq (0),$$ and thus $\operatorname{max}\{m_i-1, m'_i\} < m_i$. So it must be $m'_i < m_i$. 
		
Let us now consider the case in which $x$ is not essential, and thus there exists $s\in \{1,\ldots,k\}$ such that $m'_s=m_s$. Suppose there are not $i,j\in \{1,\ldots, k\}$ such that $i\neq j$, $m'_i < m_i$, $m'_j < m_j$. 
If $m'_j=m_j$ for every $j\neq i$ and $m'_i=m_i -1$, then $x$ is an atom and thus a $\m$-element. Indeed, if an ideal $$y=\left(p_1^{m''_1}\cdot \ldots \cdot p_k^{m''_k}\right)$$ is  contained in $x$ we must have $m_j \leqslant m''_j$ for every $j\neq i$ and $m_i-1 \leqslant m''_i$ and thus either $x=y$, or $y=(0)$. Otherwise, there must exist $i\in \{1, \ldots, k\}$ such that $m'_j=m_j$ for every $j\neq i$ and $m'_i < m_i-1$. Let $y$, $z$ be ideals of $\mathds{Z}_n$ such that $y\wedge z \neq (0)$, $x \wedge y \neq (0)$ and $x \wedge z \neq (0)$. Since $x \wedge y \neq (0)$, the prime $p_i$ must appear in the factorization of the generator of $y$ with an exponent strictly smaller than $m_i$. And analogously for the generator of $z$. This implies that $x \wedge y\wedge z \neq (0)$ since the exponent of the prime $p_i$ in the unique factorization of the generator of $x \wedge y\wedge z$ is strictly less than $m_i$. This proves that $x$ is a $\m$-element. 
		
Suppose now that there exists $i,j\in \{1,\ldots, k\}$ such that $i\neq j$, $m'_i < m_i$, $m'_j < m_j$ and consider the following ideals: 
$$y=\left(p_1^{m_1}\cdot \ldots \cdot p_i^{m_i} \cdot \ldots \cdot p_j^{m'_j} \cdot \ldots \cdot p^0_s\cdot \ldots \cdot p_k^{m_k}\right) \quad \text{ and } \quad z=\left(p_1^{m_1}\cdot \ldots \cdot p_i^{m'_i} \cdot \ldots \cdot p_j^{m_j} \cdot \ldots \cdot p^0_s\cdot \ldots \cdot p_k^{m_k}\right).$$
Then, since $m_s>0$, we have 
$$y\wedge z=\left(p_1^{m_1}\cdot \ldots \cdot p_i^{m_i} \cdot \ldots \cdot p_j^{m_j} \cdot \ldots \cdot p^0_s\cdot \ldots \cdot p_k^{m_k}\right) \neq (0).$$
Moreover, since since $m'_j < m_j$, we have $$x \wedge y=\left(p_1^{m_1}\cdot \ldots \cdot p_i^{m_i} \cdot \ldots \cdot p_j^{m'_j} \cdot \ldots \cdot p_s^{m_s}\cdot \ldots \cdot p_k^{m_k}\right)\neq (0)$$  and analogously $x \cap z\neq (0)$ because $m'_i < m_i$. But clearly $x\wedge y\wedge z=(0)$. So, $x$ is not a $\m$-element.
\end{proof}
	
The previous characterization can be proved more generally for the quantale of ideals of a quotient ring of a PID, and the proof is completely analogous to the one of Theorem \ref{charzn}.
	
\begin{theorem}\label{charquopid}
Let $R$ be a PID and let $I:=(a)\subseteq R$ be an ideal of $R$. Consider the unique factorization $a=p_1^{m_1}\cdot \ldots \cdot p_k^{m_k}$ of $a\in R$ as product of prime elements. In the quantale of ideals of the ring $R/I$, an element 
$$x:=(l)=(\operatorname{gcd}(l,n))=\left(p_1^{m'_1}\cdot \ldots \cdot p_k^{m'_k}\right)$$
is a $\m$-element if and only if there are not $i,j,s \in \{1,\ldots, k\}$ such that $i\neq j$, $m'_i < m_i$, $m'_j < m_j$ and $m'_s=m_s$. 
In particular, $x$ is essential if and only if for every $i$ we have $m'_i < m_i$.
\end{theorem}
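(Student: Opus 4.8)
The plan is to reduce the whole statement to the same exponent combinatorics used in the proof of Theorem \ref{charzn}, the only genuinely new ingredient being a description of the ideal lattice of $R/I$. Since $R$ is a PID, hence a UFD, an ideal of $R$ contains $I=(a)$ precisely when it is $(b)$ for some divisor $b$ of $a$, and such a $b$ is, up to units, of the form $b=p_1^{m_1''}\cdot\ldots\cdot p_k^{m_k''}$ with $0\leqslant m_i''\leqslant m_i$ for each $i$. Thus the ideals of $R/I$ are in bijection with the tuples $(m_1'',\ldots,m_k'')\in\prod_i\{0,\ldots,m_i\}$, ordered by reverse divisibility; under this bijection the meet (intersection) of two ideals corresponds to the componentwise maximum of the tuples, since $(b)\cap(c)=(\operatorname{lcm}(b,c))$ in a PID, and the zero ideal of $R/I$ corresponds to the tuple $(m_1,\ldots,m_k)$, i.e.\ to the tuple all of whose exponents are maximal. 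Once this dictionary is in place, the bottom element, the atoms, essentiality, and the meet operation are given by exactly the same formulas as for $\mathds{Z}_n$, and by Proposition \ref{net} it suffices to verify the $\m$-element condition for two elements at a time.

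With this setup I would simply transcribe the three parts of the proof of Theorem \ref{charzn}. For the essentiality claim: if $m_i'<m_i$ for all $i$, then any nonzero $y=(p_1^{m_1''}\cdot\ldots\cdot p_k^{m_k''})$ has some $m_j''<m_j$, so $p_j$ occurs with exponent $\max\{m_j',m_j''\}<m_j$ in the generator of $x\wedge y$ and $x\wedge y\neq(0)$; conversely, testing $x$ against the ideal obtained by lowering $m_i$ to $m_i-1$ forces $m_i'<m_i$ for each $i$. For the ``if'' direction of the main equivalence, assume $x$ is not essential, so some $m_s'=m_s$, and assume there are no distinct $i\neq j$ with $m_i'<m_i$ and $m_j'<m_j$; then at most one index $i$ has $m_i'<m_i$. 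If none, $x=(0)$ is a $\m$-element by Proposition \ref{bpmi}(1); if exactly one with $m_i'=m_i-1$, then $x$ is an atom, hence a $\m$-element by Proposition \ref{bpmi}(\ref{atmu}); and if $m_i'<m_i-1$, then for any $y,z$ with $y\wedge z\neq(0)$, $x\wedge y\neq(0)$, $x\wedge z\neq(0)$, the equalities $m_j'=m_j$ for $j\neq i$ force the generators of $y$ and $z$ to carry $p_i$ with exponent $<m_i$, whence $p_i$ occurs with exponent $<m_i$ in the generator of $x\wedge y\wedge z$, so $x\wedge y\wedge z\neq(0)$ and $x$ is a $\m$-element by Proposition \ref{net}.

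For the ``only if'' direction, if there are $i\neq j$ with $m_i'<m_i$, $m_j'<m_j$ and some $s$ with $m_s'=m_s$, I would exhibit the same two ideals $y$ and $z$ as in Theorem \ref{charzn}: the tuple of $y$ is $(m_1,\ldots,m_i,\ldots,m_j',\ldots,0,\ldots,m_k)$ with $0$ in position $s$, and the tuple of $z$ is $(m_1,\ldots,m_i',\ldots,m_j,\ldots,0,\ldots,m_k)$. Taking componentwise maxima gives $y\wedge z\neq(0)$ (its $p_s$-exponent is $0<m_s$), $x\wedge y\neq(0)$ (its $p_j$-exponent is $m_j'<m_j$), $x\wedge z\neq(0)$ (its $p_i$-exponent is $m_i'<m_i$), while $x\wedge y\wedge z$ has every exponent maximal and so equals $(0)$; hence $x$ is not a $\m$-element. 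There is no real obstacle beyond the bookkeeping: every step is a verbatim copy of the argument for $\mathds{Z}_n$, and the only new point is the identification of the ideal lattice of $R/I$ with $\prod_i\{0,\ldots,m_i\}$, which rests solely on $R$ being a PID.
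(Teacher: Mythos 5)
Your proposal is correct and follows exactly the route the paper intends: the paper's proof of Theorem \ref{charquopid} is just the remark that it is ``completely analogous'' to Theorem \ref{charzn}, and your argument is precisely that transcription, with the only genuinely new step (identifying the ideals of $R/I$ with divisors of $a$, i.e.\ exponent tuples ordered by reverse divisibility, with meet given by componentwise maximum and zero by the full tuple) carried out correctly. Your explicit handling of the case $x=(0)$ and the reduction to $n=2$ via Proposition \ref{net} are fine and add nothing beyond what the $\mathds{Z}_n$ proof already uses.
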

	
The following example shows that a product of $\m$-elements is not always a $\m$-element.
\begin{example}\label{exprodmuel}
Consider the quantale of ideals of $\mathds{Z}_{900}$. Since $900=2^2\cdot 3^2 \cdot 5^2$, by Theorem \ref{charzn}, the ideals $(6)=(2\cdot 3)$ and $(10)=(2\cdot 5)$ are $\m$-elements. However, again by Theorem \ref{charzn}, $(6)\cdot(10)=(60)=(2^2\cdot 3 \cdot 5)$ is not a $\m$-element. 
\end{example}
	
\begin{example}
Consider the quantale $L$ of ideals of $\mathds{Z}_{180}$. We notice that the ideal $(5)$, despite being a maximal element of $L$, is not a $\m$-element by Theorem \ref{charzn}. Moreover, we observe that also the radical of $L$, which is the intersection of the maximal elements and thus $(2)\cdot (3)\cdot (5)=(2\cdot 3 \cdot 5)=(30)$, is not a $\m$-element, again thanks to Theorem \ref{charzn}.
\end{example}
	
\begin{example}
Consider the quantale of ideals of the ring $R=\mathds{R}[x]/(x^2(x+1)(x+2))$. Using Theorem \ref{charquopid} we can easily find out which ideals of $R$ are $\m$-elements by looking at their generator. We conclude that $(0)$, $ (x)$, $(x^2(x+1))$, $(x^2(x+2))$, $((x+1)(x+2))$, $(x(x+1)(x+2))$ and $R$ are $\m$-elements, while $(x^2)$, $(x+1)$, $(x+2)$, $(x(x+1))$ and $(x(x+2))$ are not $\m$-elements.
\end{example}
	
\begin{theorem}\label{theorcharactopens}
Let $X$ be a topological space and let $\operatorname{Open}(X)$ be the frame of open subsets of $X$. An open $U\in\operatorname{Open}(X)$ is a $\m$-element if and only if $U$ is dense or irreducible.
In particular, $U$ is essential in $\operatorname{Open}(X)$ if and only if $U$ is dense in $X$.
\end{theorem}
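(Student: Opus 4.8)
\emph{Proof proposal.} The plan is to reduce both sides of the disjunction to elementary topological conditions. First I would record two equivalences. (i) $U$ is dense in $X$ if and only if $U$ is an \emph{essential} element of $\operatorname{Open}(X)$: indeed $\overline{U}=X$ precisely says that $U\cap V\neq\emptyset$ for every nonempty open $V$. This already yields the ``in particular'' clause, and, via Proposition~\ref{eim}, one direction of the main statement. (ii) $U$ is \emph{irreducible} (as a subspace) if and only if any two nonempty open subsets of $U$ meet; since the open subsets of $U$ are exactly the sets $U\cap V$ with $V\in\operatorname{Open}(X)$, this reads: for all open $V_1,V_2\subseteq X$ with $U\cap V_1\neq\emptyset$ and $U\cap V_2\neq\emptyset$, one has $U\cap V_1\cap V_2\neq\emptyset$. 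This is the standard characterization of irreducibility, applied to the subspace topology on $U$.

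For the implication (dense or irreducible) $\Rightarrow$ ($\m$-element): by Proposition~\ref{net} it suffices to verify the defining condition for $n=2$. If $U$ is dense it is essential, hence a $\m$-element by Proposition~\ref{eim}. If $U$ is irreducible, then condition (ii) is literally stronger than the $\m$-element condition for $n=2$ --- it reaches the same conclusion while \emph{omitting} the hypothesis $V_1\cap V_2\neq\emptyset$ --- so $U$ is again a $\m$-element.

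For the converse, suppose $U$ is a $\m$-element that is not dense; I would show $U$ is irreducible using (ii). Put $W:=X\setminus\overline{U}$, a nonempty open set with $U\cap W=\emptyset$. Given open $V_1,V_2$ with $U\cap V_1\neq\emptyset$ and $U\cap V_2\neq\emptyset$, apply the $\m$-element property (with $n=2$) to the pair $V_1\cup W$ and $V_2\cup W$. The hypotheses are met: $(V_1\cup W)\cap(V_2\cup W)\supseteq W\neq\emptyset$, and $U\cap(V_i\cup W)=(U\cap V_i)\cup(U\cap W)=U\cap V_i\neq\emptyset$ for $i=1,2$. Hence $U\cap(V_1\cup W)\cap(V_2\cup W)\neq\emptyset$. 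Since $(V_1\cup W)\cap(V_2\cup W)=(V_1\cap V_2)\cup W$ and $U\cap W=\emptyset$, intersecting with $U$ removes the $W$-part, giving $U\cap V_1\cap V_2\neq\emptyset$. This is exactly irreducibility of $U$. The degenerate case $U=\emptyset$ is immediate and handled separately.

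The only step requiring genuine thought is the last paragraph: recognizing the correct reformulation of irreducibility in terms of ambient open sets, and then the ``padding'' trick of replacing $V_i$ by $V_i\cup W$ so as to force the hypothesis $V_1\cap V_2\neq\emptyset$ while leaving the relevant intersections with $U$ unchanged. Everything else is routine rewriting.
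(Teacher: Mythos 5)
Your proposal is correct and follows essentially the same route as the paper's proof: dense $\Leftrightarrow$ essential, irreducibility giving the $\m$-condition via $U\cap V_1\cap V_2=(U\cap V_1)\cap(U\cap V_2)$, and for the converse the same padding trick of joining a nonempty open disjoint from $U$ (the paper uses an arbitrary such $Z$ and opens contained in $U$, you use $X\setminus\overline{U}$ and ambient opens, which is only a cosmetic difference).
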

	
\begin{proof}
We first prove that $U$ is essential if and only if it is dense in $X$. If $U$ is essential, then given $V\in \operatorname{Open}(X)$ with $V\neq \emptyset$ we must have $U \wedge V \neq \emptyset$ and thus $U$ is dense. Conversely, if $U$ is a dense subspace of $X$, it must intersect all non-empty open subsets of $X$ and thus $U$ is essential.
		
We now prove that if $U$ is irreducible then $U$ is a $\m$-element.  Let $V$, $W\in \operatorname{Open}(X)$ such that $U\wedge V\neq \emptyset$, $U\wedge W\neq \emptyset$ and $V\wedge W\neq \emptyset$. Then $U\wedge V \wedge W \neq \emptyset$, because $$U\wedge V \wedge W=(U \wedge V) \wedge (U \wedge W)$$ and $U \wedge V$ and $U \wedge W$ are non-empty open subsets of $U$ which is irreducible. Hence, $U$ is a $\m$-element.
We have thus proved that if $U$ is either dense or irreducible it is a $\m$-element.
		
For the converse, assume that $U$ is a $\m$-element and that $U$ is not dense. We prove that $U$ is irreducible. Let $V$, $W \leqslant U$ open subsets of $U$ such that $V\neq \emptyset\neq W$; we show that $V\wedge W\neq \emptyset$. Notice that, since $U$ is open, we have that $V$ and $W$ are open subsets of $X$. As $U$ is not dense, there exists $Z\in \operatorname{Open}(X)$ such that $Z \neq \emptyset$ and $U\wedge Z= \emptyset$. Consider now $V\vee Z$, $W \vee Z\in\operatorname{Open}(X)$. Then $U\wedge (V \vee Z)=V \neq \emptyset$, $U\wedge (W \vee Z)=W \neq \emptyset$, and $(V \vee Z) \wedge (W \vee Z)\geqslant Z\neq \emptyset$. Since $U$ is a $\m$-element, this implies that $U \wedge (V \vee Z) \wedge (W \vee Z) \neq \emptyset$. And
\[U \wedge (V \vee Z) \wedge (W \vee Z)=U \wedge (V \vee Z) \wedge U \wedge (W \vee Z)=V\wedge W. \qedhere\]
\end{proof}
	
In particular, we obtain a characterization of $\m$-elements in the frame of power set of a non-empty set $X$.
	
\begin{corollary}
Let $X$ be a set. The $\m$-elements of the quantale $\mathcal{P}(X)$ of power set of $X$ are exactly $\emptyset$, $X$ and the singletons.
\end{corollary}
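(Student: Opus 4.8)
The plan is to deduce this directly from Theorem \ref{theorcharactopens} by equipping $X$ with the \emph{discrete} topology. With that choice every subset of $X$ is open, so $\operatorname{Open}(X)=\mathcal{P}(X)$ as frames, and Theorem \ref{theorcharactopens} then says that a subset $U\subseteq X$ is a $\m$-element of $\mathcal{P}(X)$ if and only if $U$ is dense in $X$ or $U$ is irreducible. So the whole task reduces to identifying the dense subsets and the irreducible subsets of a discrete space.

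First I would determine the dense subsets. In the discrete topology each singleton $\{x\}$ is open, so the closure of an arbitrary $U\subseteq X$ equals $U$ itself; hence the only dense subset is $X$. Next I would determine the irreducible subsets. A nonempty $U\subseteq X$, viewed as a subspace, is again discrete, and if $U$ contains two distinct points $a\neq b$ then $U=\{a\}\cup(U\setminus\{a\})$ writes $U$ as a union of two nonempty proper (relatively) open subsets, so $U$ is not irreducible; conversely a one-point space is trivially irreducible. Thus the irreducible subsets are exactly the singletons, while $\emptyset$ is in any case a $\m$-element by Proposition \ref{bpmi}.

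Combining these two observations with Theorem \ref{theorcharactopens}, the $\m$-elements of $\mathcal{P}(X)$ are precisely $\emptyset$, $X$, and the singletons of $X$. I expect no genuine obstacle in this argument: the only points needing (routine) care are the computation of closures in the discrete topology and the explicit decomposition $U=\{a\}\cup(U\setminus\{a\})$ showing that every subset of size at least two fails to be irreducible. (Alternatively, one could prove the corollary from scratch: $X$ is essential hence a $\m$-element, singletons are atoms hence $\m$-elements by Proposition \ref{bpmi}(\ref{atmu}), and for $|U|\geq 2$ the sets $y=(X\setminus\{a\})$ and $z=(X\setminus\{b\})$ with $a\neq b$ in $U$ witness that $U$ is not a $\m$-element; but routing through Theorem \ref{theorcharactopens} is cleaner.)
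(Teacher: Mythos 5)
Your proposal is correct and follows essentially the same route as the paper: equip $X$ with the discrete topology, apply Theorem \ref{theorcharactopens}, and observe that the only dense subset is $X$ while the irreducible ones are the singletons (the paper counts $\emptyset$ as trivially irreducible under Definition \ref{defirreducible}, whereas you handle it separately via Proposition \ref{bpmi}, which amounts to the same thing).
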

	
\begin{proof}
Consider $X$ as a discrete topological space. Then, by Theorem \ref{theorcharactopens}, the $\m$-elements of $\mathcal{P}(X)=\operatorname{Open}(X)$ are exactly the dense subsets of $X$ and the irreducible ones. But the only dense subspace of $X$ for the discrete topology is $X$ itself and the irreducible ones are exactly $\emptyset$ (in a trivial way) and the singletons.
\end{proof}
	
Theorem \ref{theorcharactopens} opens the way towards a complete characterization of $\m$-elements in any modular quantale. With a suitable notion of \emph{irreducible element} in a quantale, we will generalize Theorem \ref{theorcharactopens} to the much wider context of modular quantales.
	
\begin{definition}\label{defirreducible}
A quantale $L$ is \emph{irreducible} if given any $y$, $z \in L$ such that $y \wedge z=0$, we have that either $y=0$ or $z=0$. An element $x\in L$ is \emph{irreducible} if the quantale $x^{\downarrow}$ is irreducible. Equivalently, $x$ is irreducible if given any $y$, $z \in L$ with $y\leqslant x$ and $z \leqslant x$ such that $y \wedge z=0$ we have that either $y=0$ or $z=0$.
\end{definition}
	
\begin{remark}
Definition \ref{defirreducible} generalizes the definition of irreducible element given in \cite{D04} for frames. In  particular, for the  frame of open subsets of a topological spaces, this definition recovers the notion of irreducible topological space involved in Theorem \ref{theorcharactopens}.
As observed in \cite{D04}, this notion of irreducible element is different from the usual notion of \emph{meet-irreducible} element in a lattice. Indeed, our notion involves a primeness (with respect to meets) condition for the bottom element of the downset of $x\in L$ rather than a primeness condition for  the element $x$ itself.
\end{remark}
	
The following result generalizes Theorem \ref{theorcharactopens} to the case of a modular quantale.
	
\begin{theorem}\label{theorcharact}
Let $L$ be a modular quantale. An element $x$ in $L$ is a $\m$-element if and only if $x$ is either essential or irreducible.
\end{theorem}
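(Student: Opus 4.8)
The plan is to establish the two implications separately, and to note that only the forward (``$\m$-element $\Rightarrow$ essential or irreducible'') direction actually uses modularity; the converse holds in any quantale.

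For the easy direction, suppose $x$ is essential or irreducible. If $x$ is essential, it is a $\m$-element by Proposition~\ref{eim}. If $x$ is irreducible, I would argue exactly as in the corresponding half of Theorem~\ref{theorcharactopens}: given $y,z\in L$ with $y\wedge z\neq 0$, $x\wedge y\neq 0$ and $x\wedge z\neq 0$, the elements $x\wedge y$ and $x\wedge z$ both lie below $x$ and are nonzero, so by irreducibility of $x^{\downarrow}$ (in the equivalent form of Definition~\ref{defirreducible}) their meet $(x\wedge y)\wedge(x\wedge z)=x\wedge y\wedge z$ is nonzero; by Proposition~\ref{net} this is enough to conclude that $x$ is a $\m$-element.

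For the converse, suppose $x$ is a $\m$-element that is not essential, so there is some $z_0\in L$ with $z_0\neq 0$ and $x\wedge z_0=0$; I want to show $x$ is irreducible. Take $y,z\leqslant x$ with $y\wedge z=0$ and assume toward a contradiction that $y\neq 0$ and $z\neq 0$. Put $a:=y\vee z_0$ and $b:=z\vee z_0$. Then $a\wedge b\geqslant z_0\neq 0$, while $x\wedge a\geqslant y\neq 0$ and $x\wedge b\geqslant z\neq 0$ (using $y,z\leqslant x$); since $x$ is a $\m$-element, Proposition~\ref{net} forces $x\wedge a\wedge b\neq 0$. On the other hand, because $y\leqslant x$, the modular law gives $x\wedge a=x\wedge(y\vee z_0)=y\vee(x\wedge z_0)=y$, and likewise $x\wedge b=z$; hence $x\wedge a\wedge b=(x\wedge a)\wedge(x\wedge b)=y\wedge z=0$, a contradiction. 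Therefore $y=0$ or $z=0$, i.e.\ $x$ is irreducible.

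The one point requiring care is the choice of the witness $z_0$ to non-essentiality together with the two ``inflations'' $y\vee z_0$ and $z\vee z_0$, which collapse back to $y$ and $z$ upon meeting with $x$ thanks to modularity; this is precisely the quantale substitute for the frame computation $U\wedge(V\vee Z)=V$ used in Theorem~\ref{theorcharactopens}. Once this replacement of distributivity by the modular law is in place, I do not expect any genuine obstacle, as the rest is a short direct argument.
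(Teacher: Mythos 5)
Your proposal is correct and follows essentially the same route as the paper: the easy direction via Proposition~\ref{eim} and the identity $x\wedge y\wedge z=(x\wedge y)\wedge(x\wedge z)$, and the converse by taking a witness $w$ of non-essentiality, forming $y\vee w$ and $z\vee w$, and using the modular law $x\wedge(y\vee w)=y\vee(x\wedge w)=y$ to conclude. The only difference is cosmetic (you argue by contradiction from $y\wedge z=0$, the paper shows $y\wedge z\neq 0$ directly), and your statement of the modular-law step is in fact the correct form.
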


\begin{proof}
If $x$ is essential, then $x$ is clearly a $\m$-element in $L$. Now, let $x$ be an irreducible element in $L$ and let $y$, $z\in L$ be such that $y\wedge z \neq 0$, $x \wedge y \neq 0$, and $x \wedge z \neq 0$. We prove that $c\wedge z \neq 0$. Notice that $x \wedge y$ and $x \wedge z$ are non-zero elements of $x^{\downarrow}$, and hence the irreducibility of $x$ implies that $(x \wedge y) \wedge (x \wedge z)\neq 0$. But $x \wedge y \wedge z=(x \wedge y) \wedge (x \wedge z),$ and hence, we conclude that $x$ is a $\m$-element.
		
Conversely, suppose that $x$ is a $\m$-element, but not essential. This means that there exists $w\in L$ such that $w\neq 0$ and $x \wedge w=0$. We show that $x$ is irreducible. Let $y$, $z\in L$ such that $y\neq 0 \neq z$, $y \leqslant x$ and $z \leqslant x$. We prove that $y \wedge z \neq 0$. Consider $y'=y \vee w$ and $z'=z \vee w$. Then, we have $y' \wedge z' \geqslant w \neq 0$. Moreover, $$x \wedge y'= x \wedge (y \vee w)=y \wedge (x \wedge w),$$ thanks to the modularity of $L$, and hence $x \wedge y'=y \neq 0$ since $x \wedge w=0$. Analogously, $x \wedge z'=z \neq 0$. Since $x$ is a $\m$-element, this implies $x \wedge y' \wedge z' \neq 0$. But $$x \wedge y' \wedge z'= (x \wedge y') \wedge (x \wedge z')= y \wedge z.$$ This shows that $x$ is irreducible.
\end{proof}
	
\begin{remark}
Theorem \ref{theorcharact} explains the  reason why atoms are always $\m$-elements. Interestingly, the natural generalization of essential elements given by shifting from $n=1$ to consider all $n\geqslant 2$ captures and simultaneously generalizes also irreducible elements.
\end{remark}
	
\begin{remark}
Theorem \ref{theorcharact} offers a different way to prove the explicit characterization of $\m$-elements given in Theorem \ref{charquopid} for the quantale of ideals of a ring $R/I$ with $R$ a PID and $I:=(p_1^{m_1}\cdot \ldots \cdot p_k^{m_k})$ an ideal in $R$. Notice that if $$x:=\left(p_1^{m'_1}\cdot \ldots \cdot p_k^{m'_k}\right),$$ and there exist $i,$ $j,$ $s \in \{1,\ldots, k\}$ such that $i\neq j$, $m'_i < m_i$, $m'_j < m_j$, and $m'_s=m_s$, then $x$ is not irreducible because the ideals 
\[\left(p_1^{m_1}\cdot \ldots \cdot p_i^{m_i} \cdot \ldots \cdot p_j^{m'_j} \cdot \ldots \cdot p_k^{m_k}\right)\qquad \text{and} \qquad \left(p_1^{m_1}\cdot \ldots \cdot p_i^{m'_i} \cdot \ldots \cdot p_j^{m_j} \cdot \ldots p_k^{m_k}\right)\] are contained in $x$ and are non-zero, but their meet is $(0)$.
\end{remark}
	
\begin{remark}
Thanks to Theorem \ref{theorcharact}, we can understand when the socle $\mathrm{Soc}(L)$ of a modular quantale $L$ is a $\m$-element. If $L$ has no atoms, then $\mathrm{Soc}(L)=0$ is trivially a $\m$-element. If $L$ has only one atom $a$, then $\mathrm{Soc}(L)=a$ is irreducible, and thus a $\m$-element. If $L$ has more than one atom, then $\mathrm{Soc}(L)$ is not irreducible, as the meet of any two distinct atoms $a$ and $b$ is 0. Indeed, $a\wedge b\leq a$, whence $a\wedge b=0$ or $a\wedge b=a$. But if $a\wedge b=a$ then $a\leq b$, whence $a=b$. So, if $L$ has more than one atom, by Theorem \ref{theorcharact}, $\mathrm{Soc}(L)$ is a $\m$-element precisely when it is essential. This is the case, for example, when $L$ is atomic (see
\cite[Proposition 5.1]{Cal00}).
\end{remark}
	
We can, in fact, generalize the above remark to the context of independent subsets of a modular quantale.
	
\begin{remark}
Let $M$ be an independent subset (see Section \ref{bck} for definition) in a modular quantale $L$.  It is shown in \cite{Cal00} that $\bigvee M$ is an essential element if and only if $M$ is maximal independent. Notice that if $M$ contains more than one element, then $\bigvee M$ cannot be irreducible, by definition of independent subset. So, by Theorem \ref{theorcharact}, $\bigvee M$ is a $\m$-element if and only if $M$ is maximal independent or $M$ contains only one element which is irreducible.
\end{remark}
	
\begin{theorem}\label{charmuclosed}
Let $L$ be a modular quantale and let $x\in L$ with $x\neq 1$. The following are equivalent:
\begin{enumerate}
\item $x$ is $\m$-closed;

\item $x$ is essentially closed and not irreducible.
\end{enumerate}
Moreover, $x=1$ is both $\m$-closed and essentially closed.
\end{theorem}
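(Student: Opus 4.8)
The plan is to reduce the whole statement to the characterization of $\m$-elements in modular quantales (Theorem \ref{theorcharact}), applied both to $L$ and to the downsets $b^{\downarrow}$ with $x\leqslant b$. The first step would be a preparatory observation: for $x\leqslant b$, the conditions ``$x$ is a $\m$-element in $b^{\downarrow}$'', ``$x$ is essential in $b^{\downarrow}$'' and ``$x$ is irreducible'' refer only to meets, joins and the bottom element $0$, all of which $b^{\downarrow}$ inherits from $L$; since $b^{\downarrow}$ is then itself a modular complete lattice with the same $0$, the argument proving Theorem \ref{theorcharact} (which never uses the multiplication) goes through verbatim inside $b^{\downarrow}$. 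Moreover, ``$x$ is irreducible in $b^{\downarrow}$'' is literally the same assertion as ``$x$ is irreducible in $L$'', since it only concerns the lattice $x^{\downarrow}$. So the observation reads: for $x\leqslant b$, one has $x\lm b^{\downarrow}$ if and only if $x$ is essential in $b^{\downarrow}$ or $x$ is irreducible.

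For (1) $\Rightarrow$ (2) I would assume $x$ is $\m$-closed with $x\neq 1$. That $x$ is essentially closed is exactly the remark preceding the theorem (if $x$ is essential in $b^{\downarrow}$ it is a $\m$-element there, so $\m$-closedness gives $x=b$). For ``not irreducible'', I would argue by contradiction: if $x$ were irreducible, then by Theorem \ref{theorcharact} it is a $\m$-element of $L=1^{\downarrow}$, that is $x\lm 1^{\downarrow}$, and $\m$-closedness would force $x=1$, contrary to hypothesis.

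For (2) $\Rightarrow$ (1) I would assume $x$ essentially closed and not irreducible, and take any $b$ with $x\lm b^{\downarrow}$. By the preparatory observation, $x$ is essential in $b^{\downarrow}$ or irreducible; the second case is excluded, so $x$ is essential in $b^{\downarrow}$, and essential closedness gives $x=b$; hence $x$ is $\m$-closed. The case $x=1$ is immediate: $1\leqslant b$ forces $b=1$, so both ``$1\lm b^{\downarrow}$'' and ``$1$ essential in $b^{\downarrow}$'' only hold for $b=1$.

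The only genuinely delicate step is the preparatory observation --- namely legitimising the application of Theorem \ref{theorcharact} inside a downset, and noticing that irreducibility of an element is intrinsic to its downset and therefore insensitive to the ambient quantale. After that, the equivalence is just Theorem \ref{theorcharact} combined with the already-recorded fact that $\m$-closed elements are essentially closed.
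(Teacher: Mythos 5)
Your proposal is correct and follows essentially the same route as the paper: essential closedness from the fact that essential elements are $\m$-elements, non-irreducibility via Theorem \ref{theorcharact} applied in $L=1^{\downarrow}$, and the converse by applying Theorem \ref{theorcharact} inside $b^{\downarrow}$ and using that irreducibility of $x$ is intrinsic to $x^{\downarrow}$. Your ``preparatory observation'' merely makes explicit a point the paper passes over quickly (that $a^{\downarrow}$ can be treated as a modular quantale for the purposes of Theorem \ref{theorcharact}), which is a welcome clarification but not a different argument.
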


\begin{proof}
Let $x\neq 1$ be $\m$-closed. Since every essential element is a $\m$-element, $x$ is clearly essentially closed. Moreover, $x$ is not irreducible. Indeed, if $x$ is irreducible in $L$, by Theorem \ref{theorcharact}, $x$ is a $\m$-element and so $x \lm 1^\downarrow$. Since $x\neq 1$, this implies that $x$ is not $\m$-closed which gives a contradiction. Hence, $x$ is not irreducible in $L$.
		
To prove the converse, assume that $x\neq 1$ is essentially closed and not irreducible. Let then $a\in L$ be such that $x \lm a^\downarrow$. We show that $a=x$. Since $a^\downarrow$ is a modular quantale, by Theorem \ref{theorcharact}, we have that $x$ is either irreducible in $a^\downarrow$ or it is essential in $a^\downarrow$. If $x$ is irreducible in $a^\downarrow$, then $x$ is irreducible in $L$ as well, which gives a contradiction. If $x$ is essential in $a^\downarrow$ then $x=a$ because $x$ is essentially closed. Hence $x$ is $\m$-closed. 
\end{proof}
	
Thanks to our characterization of $\m$-closedness in a modular quantale, we can prove the following, which is known for essential closedness.
	
\begin{corollary}
Suppose $L$ is a modular quantale and $a$, $b\in L$. If $a\leqslant b$, $a$ is $\m$-closed in $b^\downarrow$, and $b$ is $\m$-closed in $L$, then $a$ is $\m$-closed in $L$.
\end{corollary}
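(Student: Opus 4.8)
The plan is to reduce the statement to the corresponding transitivity property for \emph{essential} closedness, which is already known (see \cite{Cal00}), by passing through the characterization of $\m$-closedness proved in Theorem \ref{charmuclosed}. First I would dispose of the degenerate cases: if $b=1$, then $b^\downarrow=L$ and the hypothesis that $a$ is $\m$-closed in $b^\downarrow$ is literally the conclusion; if instead $b\neq 1$ but $a=b$, then the conclusion ``$a$ is $\m$-closed in $L$'' is just the hypothesis ``$b$ is $\m$-closed in $L$''. So I may assume $a<b<1$; in particular $a\neq 1$, and $a$ is distinct from $b$, the top element of the modular quantale $b^\downarrow$.

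Next I would apply Theorem \ref{charmuclosed} twice. Since $b\neq 1$ and $b$ is $\m$-closed in $L$, Theorem \ref{charmuclosed} gives that $b$ is essentially closed in $L$. Since $a\neq b$ and $a$ is $\m$-closed in the modular quantale $b^\downarrow$, Theorem \ref{charmuclosed} applied inside $b^\downarrow$ gives that $a$ is essentially closed in $b^\downarrow$ and that $a$ is not irreducible. Here I would point out that irreducibility of $a$ is an intrinsic property: by Definition \ref{defirreducible} it depends only on the lattice $a^\downarrow$, which does not change when we pass from $L$ to $b^\downarrow$, so ``$a$ is not irreducible in $b^\downarrow$'' and ``$a$ is not irreducible in $L$'' say the same thing.

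Finally I would invoke the known transitivity of essential closedness: from $a\leqslant b$, $a$ essentially closed in $b^\downarrow$, and $b$ essentially closed in $L$, it follows that $a$ is essentially closed in $L$. Since in addition $a$ is not irreducible and $a\neq 1$, the implication $(2)\Rightarrow(1)$ of Theorem \ref{charmuclosed} yields that $a$ is $\m$-closed in $L$, as desired.

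I do not expect a genuine obstacle: the hard work is already contained in Theorem \ref{charmuclosed} (and ultimately in Theorem \ref{theorcharact}). The only points requiring care are the case split according to whether $b=1$ or $a=b$, the routine fact that $b^\downarrow$ is again a modular quantale so that Theorem \ref{charmuclosed} applies to it (the relevant notions involving only the lattice structure), and the observation that irreducibility is intrinsic; the one ingredient imported from the literature is the transitivity of essential closedness in a modular quantale.
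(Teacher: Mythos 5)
Your proof is correct and follows essentially the same route as the paper: translate both hypotheses into essential closedness plus non-irreducibility via Theorem \ref{charmuclosed}, import the transitivity of essential closedness from \cite{Cal00}, observe that irreducibility depends only on $a^\downarrow$ and so is unaffected by passing from $b^\downarrow$ to $L$, and apply Theorem \ref{charmuclosed} again. Your explicit handling of the degenerate cases $b=1$ and $a=b$ is a small extra care the paper leaves implicit, but the substance is identical.
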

	
\begin{proof}
By Theorem \ref{charmuclosed}, $a$ is essentially closed in $b^\downarrow$ and not irreducible in $b^\downarrow$. By  \cite[Exercise 4.16]{Cal00}, $a$ essentially closed in $b^\downarrow$ implies that $a$ essentially closed in $L$. Moreover, by the definition of irreducibility, $a$ not irreducible in $b^\downarrow$ implies that $a$ is not irreducible in $L$. Hence, by Theorem \ref{charmuclosed}, we conclude that $a$ is $\m$-closed in $L$.
\end{proof}
	
\section*{Acknowledgements}
The second author thanks Themba Dube for some fruitful discussions.
	
\section*{Declarations}

\textbf{Funding.} Research did not receive any funding.

\textbf{Conflict of interest.} The authors declare that they have no conflict of interest.

\textbf{Human and animals rights.} Research did not involve human participants and/or animals.

\end{document}